\numberwithin{equation}{section}
\journal{}
\begin{document}

\newtheorem{definition}{Definition}
\newtheorem{lemma}{Lemma}
\newtheorem{remark}{Remark}
\newtheorem{theorem}{Theorem}

\newtheorem{proposition}{Proposition}
\newtheorem{assumption}{Assumption}
\newtheorem{example}{Example}
\newtheorem{corollary}{Corollary}
\def\e{\varepsilon}
\def\Rn{\mathbb{R}^{n}}
\def\Rm{\mathbb{R}^{m}}
\def\mE{\mathbb{E}}
\def\hte{\bar\theta}
\def\cC{{\mathcal C}}
\numberwithin{equation}{section}

\begin{frontmatter}

\title{{\bf Effective filtering analysis for non-Gaussian dynamic systems}
\footnote{{\it AMS Subject Classification(2010):} 60H10, 37D10, 70K70.
This work was partly supported by the NSF grant 1620449,  and NSFC grants 11371352, 11531006 and 11771449.}}
\author{\centerline{\bf Yanjie Zhang$^{a,}\footnote{ zhangyanjie2011@163.com .}$,
Huijie Qiao$^{b,*}\footnote{corresponding author: hjqiaogean@seu.edu.cn.}$ and
Jinqiao Duan$^{c,}\footnote{duan@iit.edu}$}
\centerline{${}^a$ Center for Mathematical Sciences \& School of Mathematics and Statistics,}
\centerline{Huazhong University of Sciences and Technology, Wuhan 430074,  China}
\centerline{${}^b$ School of Mathematics,} \centerline{Southeast University, Nanjing, Jiangsu 211189,  China}
\centerline{${}^c$ Department of Applied Mathematics,} \centerline{Illinois Institute of Technology, Chicago, IL 60616, USA}}

\begin{abstract}
This work is about a slow-fast data assimilation system under non-Gaussian noisy fluctuations. Firstly, we show the existence of a random invariant manifold for a  stochastic dynamical system  with  non-Gaussian noise and  two-time scales. Secondly, we obtain a
low dimensional reduction of this system via a random invariant manifold. Thirdly, we prove that the low dimensional filter on the random invariant manifold approximates the original filter, in a probabilistic sense.
\end{abstract}

\begin{keyword}
 Random invariant manifold, $\alpha$-stable noise, Zakai equation, data assimilation, non-Gaussian noise.

\end{keyword}

\end{frontmatter}

\section{Introduction}
Data assimilation is a procedure to extract system state information with help of noisy observations \cite{BC, ge, RBL}. The state evolution is often non-Gaussian (in particular, L\'evy type) in complex  systems.   Data assimilation systems under non-Gaussian noisy fluctuations  have been recently considered by us and other authors \cite{cckc2, qiao, sun, spss}.  It is also desirable to  consider  data assimilation when the system evolution is under  L\'evy motions with two-time scales.

 Stochastic dynamical systems   with non-Gaussian noise with two-time scales arise widely in mathematical modeling \cite{sun, gid, bg, Fu}. Treating stochastic differential equations (SDEs) with two-time scales,  Khasminskii and Yin \cite{KY} developed a stochastic averaging principle that enables   to average out the fast varying variables. Recently, we have further showed  that the averaged, low dimensional formal filter approximates the original filter, by examining the corresponding Zakai stochastic partial differential equations \cite{zhang}.

 The theory of random invariant manifolds provides a geometric approach for eliminating the fast variables by a fixed point technique \cite{Fu, Kai}.
 For slow-fast SDEs  with  Gaussian noise on the long time scales, qualitative analysis for the behavior of random invariant manifold  can be found in Wang and Roberts \cite{WR}. Filtering problems on a random invariant manifold driven by Gaussian noise have been studied \cite{HJ}. In this present paper, our goal is to investigate filtering for stochastic differential equations   with $\alpha$-stable L\'evy noise and two-time scales, when we can only observe the slow component. Firstly, we show the existence of  a random invariant manifold for a stochastic dynamical system  driven by $\alpha$-stable noise with two-time scales. Secondly, we obtain its
low dimensional reduction via the random invariant manifold. Thirdly, we prove that the low dimensional filter on the random invariant manifold approximates the original filter, as the scale parameter tends to zero.

%So far, for the non-Gaussian stochastic dynamical systems, the previous work of filtering on random invariant manifold is still blank.

This paper is organized as follows. In Section 2,  we recall basic concepts about symmetric $ \alpha$ -stable L\'evy process and random dynamical systems. Section 3 is devoted to the existence of a random invariant manifold for a slow-fast stochastic system with  $ \alpha$ -stable L\'evy process. In Section 4, we present
a nonlinear filtering problem this slow-fast system, and prove that the low dimensional   filter approximates the original filter on the  random invariant manifold. Finally, we end the paper with a summary on our results and  some   discussions.

The following convention will be used throughout the paper: $C$    denotes different positive constants  whose values may change from one place to another; $\mathcal{C}(\mathbb{R}^n)$   is the set of all real-valued continuous functions on $\mathbb{R}^n$; we denote by $\mathcal{C}_b^1(\mathbb{R}^n)$  the set of all continuous functions on $ \mathbb{R}^{n}$, with first-order derivatives   are   uniformly bounded.  For $\phi \in \mathcal{C}_b^1(\mathbb{R}^n)$, we introduce the following norm
\begin{equation}
||\phi||={\mathbb{\max}}_{{x\in \mathbb{R}^{n}}}|\phi(x)|+{\mathbb{\max}}_{{x\in \mathbb{R}^{n}}}|\nabla\phi(x)|,
\end{equation}
where $\nabla$ represents the gradient operator.

Let $ Lip(\mathbb{R}^{m}, \mathbb{R}^{n})$ denote the set of globally Lipschitz continuous functions $ \gamma $ mapping $ \mathbb{R}^{m} $ into $ \mathbb{R}^{n} $ with the following semi-norm
\begin{equation*}
||\gamma||_{Lip}:=\sup_{y_1\neq y_2 \in \mathbb{R}^{m}}\frac{|\gamma(y_1)-\gamma(y_2)|}{|y_1-y_2|} \textless \infty,
\end{equation*}
and $ \mathcal{L}$ the subset of $ Lip(\mathbb{R}^{m},\mathbb{R}^{n} )$ consisting of bounded functions  with the following norm
\begin{equation*}
||\gamma||_{\infty}:=\sup_{\tilde{y}\in \mathbb{R}^{m}}|\gamma(\tilde{y})|.
\end{equation*}
The set $\mathcal{L}_{\kappa}\subseteq  \mathcal{L}$ whose each element $\gamma$ satisfies $||\gamma||_{Lip}\leq \kappa$.

\section{Preliminaries}
In this section, we recall some basic definitions for L\'evy motions (or L\'evy processes) and random dynamical systems.
\subsection{\textbf{Symmetric $\alpha $ -stable L\'evy processes }(see \cite{da, Duan})}
\begin{definition}
A stochastic process $L_t$ is a  L\'evy  process if
\begin{enumerate}
\item[(1)]$L_0=0$  (a.s.);
\item[(2)]$L_t$ has independent increments and stationary increments; and
\item[(3)] $L_t$ has stochastically continuous sample paths, i.e.,  for  every  $s\geq 0$,
 $L(t)\rightarrow L(s)$ in probability, as $t \rightarrow s $.
\end{enumerate}
\end{definition}
A L\'evy process $L_t$ taking values in $\mathbb{R}^n$ is characterized by a drift vector $b \in {\mathbb{R}^n}$, an $n \times n$ non-negative-definite, symmetric covariance matrix $ Q $ and a Borel measure $\nu$ defined on ${\mathbb{R}^n}\backslash \{ 0\} $.   We call $(b,Q,\nu)$ the generating triplet of  the L\'evy motions $L_t$ . Moreover, we have the L\'evy-It\^o decomposition for $L_t$ as follows:
\begin{equation}
{L_t}= bt + B_{Q}(t) + \int_{|y|< 1} y \widetilde N(t,dy) + \int_{|y|\ge 1} y N(t,dy),
\end{equation}
where $N(dt,dy)$ is the Poisson random measure, $\widetilde N(dt,dy) = N(dt,dy) - \nu(dx)dt$ is the compensated Poisson random measure, $\nu (A) = \mathbb{E}N(1,A)$ is the jump measure, and $ B_{Q}(t)$ is an independent standard $n$-dimensional Brownian motion. The characteristic function of $L_t$ is given by
\begin{equation}
\mathbb{E}[\exp({\rm i}\langle u, L_t \rangle)]=\exp(t\rho(u)), ~~~u \in {\mathbb{R}^n},
\end{equation}
where the function $\rho:{\mathbb{R}^n}\rightarrow \mathbb{C}$ is  the characteristic exponent
\begin{equation}
\rho(u)={\rm i}\langle u, b\rangle-\frac{1}{2}\langle u, Qu \rangle+\int_{{\mathbb{R}^n}\backslash \{ 0\}}{(e^{{\rm i}\langle u, z \rangle}-1-{\rm i}\langle u,z\rangle {I_{\{ | z | \textless 1\} }})\nu(dz)}.
\end{equation}
The Borel measure $\nu$ is called the jump measure. Here $\langle \cdot, \cdot \rangle $ denotes the scalar product in $ \mathbb{R}^n $.
\begin{definition}
For $\alpha \in (0,2)$, a $n$-dimensional symmetric $\alpha $-stable process $ L^{\alpha}_{t} $ is a L\'evy process with characteristic exponent $\rho$
\begin{equation}
\rho(u)=-C_1(n,\alpha)| u |^{\alpha},  ~for~u \in {\mathbb{R}^{n}}
\end{equation}
with $ C_1(n, \alpha):=\pi^{-\frac{1}{2}}\Gamma((1+\alpha)/2)\Gamma(n/2)/\Gamma((n+\alpha)/2)$.
\end{definition}

For a $n$-dimensional symmetric $\alpha$-stable L\'evy process, the diffusion matrix $ Q= 0$,
the drift vector $ b= 0$, and the L\'evy measure $\nu $ is given by
\begin{equation}
\nu(du)=\frac{C_2(n,\alpha)}{{| u |}^{n+\alpha}}du,
\end{equation}
where $ C_2(n, \alpha):=\alpha\Gamma((n+\alpha)/2)/{(2^{1-\alpha}\pi^{n/2}\Gamma(1-\alpha/2))}$.
\subsection{\textbf{Random dynamical systems} (see \cite{la})}
%\begin{definition}
%Let $(\Omega,\mathscr{F},\mathbb{P})$ be a probability space, and $(\theta_t)_{t\in\mathbb{R}}$ a family of measurable
%transformations from $\Omega$ to $\Omega$.
%We call $(\Omega,\mathscr{F},\mathbb{P}; (\theta_t)_{t\in\mathbb{R}})$ a metric dynamical system if for each $t\in\mathbb{R}$,
%$\theta_t$ preserves the probability measure $\mathbb{P}$, i.e.,
%\begin{equation}
%\theta_t^*\mathbb{P}=\mathbb{P},
%\end{equation}
%and for $s,t\in\mathbb{R}$,
%\begin{equation}
%\theta_0=1_\Omega, \quad \theta_{t+s}=\theta_t\circ\theta_s.
%\end{equation}
%\end{definition}

\begin{definition}
Let $ (H,\mathscr{B}(H))$ be a measurable space. A mapping
\begin{equation}
\phi: \mathbb{R}\times \Omega\times H\mapsto H, ~~(t, \omega, x)\mapsto \phi(t,\omega, x)
\end{equation}
with the following properties is called a measurable random dynamical system (RDS), if it is jointly $\mathscr{B}(\mathbb{R})\otimes\mathscr{F}\otimes\mathscr{B}(H)/\mathscr{B}(H)$ measurable and satisfies
the cocycle property:
\begin{equation}
\label{cocycle}
\begin{array}{c}
\phi(0,\omega,\cdot)={\rm id}_{H}, \text{ for each}\ \omega\in\Omega;\\
\phi(t+s,\omega,\cdot)=\phi\big(t,\theta_s\omega,\phi(s,\omega,\cdot)\big),\ \text{ for each}\ s,\,t\in\mathbb{R},\ \omega\in\Omega.
\end{array}
\end{equation}
\end{definition}
Here  we present a simple example to illustrate the abstract definitions of measurable random dynamical system.
\begin{example}
Consider a scalar linear stochastic differential equation
\begin{equation}
dX_t=2dB_t, X_0=x.
\end{equation}
The solution mapping is $\phi(t,\omega,x)=2B_t(\omega)+x$. Note that $\phi(0,\omega,x)=x$, and $\phi(t+s, \omega,x)=2B_{t+s}(\omega)+x=\phi(t,\theta_{s}(\omega), \varphi(s, \omega, x))$.
\end{example}
\subsection{\textbf{Random invariant manifolds} (see \cite{la})} Let $\phi$ be a random dynamical system on the normed space $(H, \mathscr{B}(H))$. We introduce a random invariant manifold with respect to $\phi$.

A family of nonempty closed set $\mathcal{M}=\{\mathcal{M}(\omega)\}_{\omega \in \Omega}$ is called a random set if for every $ z \in H $, the mapping
\[\Omega\ni \omega\mapsto{\rm dist}(z,\mathcal{M}(\omega)):=\inf\limits_{z^{'}\in \mathcal{M}(\omega)}||z-z^{'}||_{H}
\]
is measurable. Moreover, $ \mathcal{M} $ is called a positively invariant set with respect to the random dynamical system $\phi$ if
\begin{equation}\label{invariant set}
\phi(t,\omega,\mathcal{M}(\omega))\subseteq \mathcal{M}(\theta_t\omega), \, \text{ for }\ t\in\mathbb{R^{+}},\ \omega\in\Omega.
\end{equation}

In the sequel, we suppose $ H=\mathbb{R}^{n} \times \mathbb{R}^{m}$ and consider random sets defined by a Lipschitz continuous graph. Let
\begin{equation}
\begin{aligned}
\gamma:\Omega\times\mathbb{R}^{m}\rightarrow\mathbb{R}^{n}\\
(\omega,y)\mapsto \gamma(\omega,y)
\end{aligned}
\end{equation}
be a function  such that $ \gamma(\omega,y)$ is  globally Lipschitzian in $y$ and for  any  $y\in\mathbb{R}^{m}$  and that the mapping $\omega\mapsto \gamma(\omega,y)$ is a random variable. Then $\mathcal{M}(\omega):=\{(\gamma(\omega,y),y):y\in\mathbb{R}^{m}\}$ is a random set (see \cite{BS}). The invariant random set $\mathcal{M}(\omega)$ is called a Lipschitz random invariant manifold.

\section{Existence of a random invariant manifold}
In this section, we will present the reduction method for a slow-fast stochastic dynamical system  with  $\alpha$-stable L\'evy process. Here we only present   main results which will be used in the next section. The detailed illustrations will be presented in the Appendix (Section 6).

Let $\mathcal{D}_{0}(\mathbb{R},\mathbb{R}^{n})$ be the space of c\`{a}dl\`{a}g functions starting at $0$ given by
\begin{equation*}
  \mathcal{D}_0=\{\omega :\ \text{for } \forall t\in\mathbb{R},\ \lim_{s\uparrow t}\omega(s)=\omega(t-),\  \lim_{s\downarrow t}\omega(s)=\omega(t)\ \text{exist}\ \text{and} \ \omega(0)=0  \}.
\end{equation*}
For functions $\omega_{1}, \omega_{2} \in \mathcal{D}_{0}$, ${\rm d}_{\mathbb{R}}(\omega_{1}, \omega_{2})$ is given by
\begin{eqnarray*}
  {\rm d}_{\mathbb{R}}(\omega_{1},\omega_{2})=\inf\left\{\varepsilon>0:
     |\omega_{1}(t)-\omega_{2}(\lambda t)|\leq\varepsilon,\ \big|\ln\frac{\arctan(\lambda t)-\arctan(\lambda s)}{\arctan(t)-\arctan(s)}\big|\leq\varepsilon\right.
     \\
    \left.\text{for every}~\ t, s\in\mathbb{R}\ \text{and some}\ \lambda\in\mathbb{R}^{\mathbb{R}}\right\},
\end{eqnarray*}
where
\begin{equation*}
\mathbb{R}^{\mathbb{R}}=\{\lambda :\mathbb{R}\rightarrow\mathbb{R};\ \lambda\ \text{is injective increasing},\ \lim\limits_{t\rightarrow-\infty}\lambda(t)=-\infty,\ \lim\limits_{t\rightarrow\infty}\lambda(t)=\infty \}.
\end{equation*}

Denote by $\mathscr{B}(\mathcal{D}_{0})$   the associated Borel $\sigma$-algebra generated by $ \mathcal{D}_{0}$, then $(\mathcal{D}_{0},\mathscr{B}(\mathcal{D}_{0}))$ is a separable metric space.
The probability measure $\mathbb{P}$ is generated by $\mathbb{P}(\mathcal{D}_{0}\cap A):=\mathbb{P}_{\mathbb{R}}(A)$ for each $A\in\mathscr{B}({\mathbb{R}})$. The Wiener shift
\begin{equation}
\begin{aligned}
  &\theta:\mathbb{R}\times\mathcal{D}_{0}\rightarrow\mathcal{D}_{0},\\
  &\theta_{t}\omega(\cdot)\mapsto\omega(\cdot+t)-\omega(t).
\end{aligned}
\end{equation}
is a Carath\'eodory function. Obviously, the Wiener shift $\theta$ is jointly measurable. The probability measure $ \mathbb{P}$ is $ \theta$-
invariant and the metric dynamical system
\begin{equation}
(\mathcal{D}_{0},\mathscr{B}(\mathcal{D}_{0}),\mathbb{P},(\theta_{t})_{t\in\mathbb{R}})
\end{equation}
is ergodic (see \cite{Kai}).

Define $(\Omega^1,\mathcal{F}^1,\mathbb{P}^1,\theta_t^1)=(\mathcal{D}_{0},\mathscr{B}(\mathcal{D}_{0}),\mathbb{P},(\theta_{t})_{t\in\mathbb{R}})$.
Then $(\Omega^1,\mathcal{F}^1,\mathbb{P}^1,\theta_t^1)$ is a metric dynamical system. Similarly, we define
$ \Omega^2, \mathcal{F}^2,\mathbb{P}^2,\theta_t^2 $. Then $ (\Omega^2, \mathcal{F}^2,\mathbb{P}^2,\theta_t^2) $ is another metric dynamical system.  Introduce
\begin{equation}
\Omega:=\Omega^1\times\Omega^2, ~\mathcal{F}:=\mathcal{F}^1\times\mathcal{F}^2, ~\mathbb{P}:=\mathbb{P}^1\times\mathbb{P}^2,~
\theta_t:=\theta_t^1\times\theta_t^2,
\end{equation}
and then $(\Omega, \mathcal{F}, \mathbb{P}, \theta_t)$ is a metric dynamical system.

In this paper, we consider the following fast-slow stochastic dynamical system
\begin{equation}
\label{rde110}
\left\{
\begin{aligned}
d{X^\varepsilon _t} &=\frac{1}{\varepsilon}AX^\varepsilon_t dt+ \frac{1}{\varepsilon}{F}({X^\varepsilon_t},{Y^\varepsilon_t})dt + \frac{\sigma_1}{\varepsilon ^{\frac{1}{\alpha _1}}}dL_1^{{\alpha _1}},  \;\; X^\varepsilon _0=x \in \mathbb{R}^n, \\
d{Y^\varepsilon _t} &= BY^\varepsilon_t dt+{G}({X^\varepsilon_t},{Y^\varepsilon _t})dt +\sigma_2 dL_2^{{\alpha _2}},  \;\; \;\;   Y^\varepsilon _0=y\in \mathbb{R}^m.
\end{aligned}
\right.
\end{equation}
Here $({X^\varepsilon_t},{Y^\varepsilon_t})$ is an ${\mathbb{R}^n} \times {\mathbb{R}^m}$-valued signal process which contains the fast and slow components, respectively. The interaction functions $ F: {\mathbb{R}^n} \times {\mathbb{R}^m}\rightarrow {\mathbb{R}^n}$ and $G: {\mathbb{R}^n} \times {\mathbb{R}^m}\rightarrow {\mathbb{R}^m}$ are Borel measurable respectively.  The constant matrixes $A$ and $B$ are ${\mathbb{R}^n} \times {\mathbb{R}^n}$ and ${\mathbb{R}^m} \times {\mathbb{R}^m}$ respectively. Both $\sigma_1$ and $ \sigma_2$ are nonzero real noise intensities. The non-Gaussian  processes  $L_1^{{\alpha _1}},L_2^{{\alpha _2}}$  (with $1<\alpha_1, \alpha_2 <2$) are independent symmetric $\alpha$-stable L\'evy processes with triplets $(0,0,{\nu_1})$ and $(0,0,{\nu _2})$, respectively.   The small parameter $\varepsilon$ $(0<\varepsilon \ll 1)$ is the ratio of the two time scales.
We make the following assumptions on the signal system.

\par
{\bf Hypothesis  H.1}
There exists a constant $ M_{A}>0 $ such that
\begin{equation*}
(Ax,x)\leq -M_{A}|x|^2,~~\mbox{for~all}~x\in\mathbb{R}^{n}.
\end{equation*}
For every $y\in\mathbb{R}^m$, $|By|\leq \|B\||y|$, where $\|B\|$  standing for the norm of the matrix $B$, and $-B$ has no eigenvalue on the imaginary axis.

\par
{\bf Hypothesis  H.2}
The functions $ F, ~G $ satisfy the global Lipschitz conditions and sublinear growth condition, i.e., there exists positive constants $L$ and $K$ such that for all $x_i\in \mathbb{R}^{n},y_i\in \mathbb{R}^{m}, \; i=1, 2$ or $x\in \mathbb{R}^{n}, y\in \mathbb{R}^{m}$, we have
\begin{equation*}
\begin{aligned}
| F(x_1,y_1)-F(x_2,y_2)|\leq L (| x_1-x_2|+| y_1-y_2|), \\
| G(x_1,y_1)-G(x_2,y_2)| \leq L (| x_1-x_2|+| y_1-y_2|),
\end{aligned}
\end{equation*}
and
\begin{equation*}
|F(x,y)|^{2}+| G(x,y|^{2} \leq K \left[ 1+|x|^2+|y|^2 \right].
\end{equation*}
\par
{\bf Hypothesis  H.3 } $ M_{A} \textgreater L$.

\par
{\bf Hypothesis  H.4 }
The function $F$ and $G$ are uniformly bounded, i.e.,
\begin{equation}
\begin{aligned}
\sup_{( x,y)\in \mathbb{R}^{n}\times \mathbb{R}^{m}}|F( x,y)|&<\infty, \\
\sup_{( x,y)\in \mathbb{R}^{n}\times \mathbb{R}^{m}}|G( x,y)|&<\infty.
\end{aligned}
\end{equation}
\begin{remark}
Hypothesis { \bf{H.3}} can be interpreted as a spectral gap condition.
\end{remark}
\begin{remark}
 Under the  assumptions ${\bf{H.2}}$ and ${ \bf{H.4}}$, the system (\ref{rde110}) has a global unique
solution   $(X^\varepsilon(t), Y^\varepsilon(t))$,  with a given  initial value $(x(0), y(0))$.

\end{remark}
Introduce the following two auxiliary systems
\begin{equation}
\label{ori6}
\begin{aligned}
d\xi^{\varepsilon}(t)&=\frac{1}{\varepsilon} A\xi^{\varepsilon}(t) dt+\frac{\sigma_1}{\varepsilon^{\frac{1}{\alpha_1}}}dL_t^{\alpha_1}(\omega_1),~~\xi(0)=\xi_{0}, \\
d\eta(t) &=B\eta(t) dt+\sigma_2dL_t^{\alpha_2}(\omega_2), ~~ \eta(0)=\eta_0.
\end{aligned}
\end{equation}
By [Appendix, Lemma 1], there exist two random variables $\xi^{\varepsilon}(t)$ and $\eta(t)$ such that  $\xi^{1, \frac{1}{\varepsilon}}(\theta_t^1\omega_1)$ and $\eta(\theta_t^2\omega_2)$ solve the equations \eqref{ori6}. Set
\begin{equation}\label{T}
\left(
  \begin{array}{ccc}
 \bar{ x}^{\varepsilon}(t)\\
  \bar{y}^{\varepsilon}(t)\\
  \end{array}
\right):=
  \left(
  \begin{array}{ccc}
    &X^\varepsilon(t)-\xi^{1, \frac{1}{\varepsilon}}(\theta_t^1\omega_1)\\
    &Y^\varepsilon(t)-\eta(\theta_t^2 \omega_2)\\
  \end{array}
\right),
\end{equation}
then $(\bar{x}^\varepsilon(t), \bar{y}^\varepsilon(t))$ satisfies the following equations
\begin{eqnarray}
\label{rdem}
\left\{\begin{array}{l}
\dot{\bar{x}}^\e(t)=\frac{1}{\e}A{\bar{x}}^\e(t)+\frac{1}{\e}F\big({\bar{x}^\e}(t)+\xi^{1, \frac{1}{\varepsilon}}(\theta^1_t\omega_1),{\bar{y}^\e}(t)+\eta(\theta^2_t{\omega_2})\big),\\
\dot{\bar{y}}^{\varepsilon}(t)=B{\bar{y}^\e}(t)+G\big({\bar{x}^\e}(t)+\xi^{1, \frac{1}{\varepsilon}}(\theta^1_t\omega_1),{\bar{y}^\e}(t)+\eta(\theta^2_t{\omega_2})\big).

\end{array}
\right.
\end{eqnarray}
The scaling $ t \rightarrow \varepsilon t $ in \eqref{rdem} yields
\begin{equation}
\label{rrde}
\left\{
\begin{aligned}
\frac{dx}{dt}&= Ax+ F\big(x+\xi^{1, \frac{1}{\varepsilon}}(\theta^1_{\e t}\omega_1),y+\eta(\theta^2_{\e t}{\omega_2})\big),\\
\frac{dy}{dt}&= \varepsilon By+\varepsilon G\big(x+\xi^{1, \frac{1}{\varepsilon}}(\theta^1_{\e t}\omega_1),y+\eta(\theta^2_{\e t}{\omega_2})\big).\\
\end{aligned}
\right.
\end{equation}

If we now replace $\xi^{1,\frac{1}{\varepsilon}}(\theta^1_{\e t}\omega_1)$ by $ \xi^{1, 1}(\theta^1_{ t}\omega_1)$  that has the same
distribution, then we obtain a system of the following form
\begin{equation}
\label{rrde1}
\left\{
\begin{aligned}
\frac{d\bar{X}^{\varepsilon}(t)}{dt}&= A\bar{X}^{\varepsilon}(t)+ F\big(\bar{X}^{\varepsilon}(t)+\xi^{1, 1}(\theta^1_{ t}\omega_1),\bar{Y}^{\varepsilon}(t)+\eta(\theta^2_{\e t}{\omega_2})\big),\\
\frac{d\bar{Y}^{\varepsilon}(t)}{dt}&= \varepsilon B\bar{Y}^{\varepsilon}(t)+\varepsilon G \big(\bar{X}^{\varepsilon}(t)+\xi^{1, 1}(\theta^1_{ t}\omega_1),\bar{Y}^{\varepsilon}(t)+\eta(\theta^2_{\e t}{\omega_2})\big).\\
\end{aligned}
\right.
\end{equation}
The system \eqref{rrde1} has a unique global solution $\phi^{\varepsilon}(t, \omega, (x_0,y_0))$ for any initial condition $(x_0,y_0)\in \mathbb{R}^{n+m}$, the solution operator of the initial value problem to system \eqref{rrde1} denoted by
\begin{equation}
\phi^{\varepsilon}(t,\omega, (x_0,y_0)):=\big(\phi^{\varepsilon}_1\big(t,\omega,(x_0,y_0)\big), \phi^{\varepsilon}_2\big(t,\omega,(x_0,y_0)\big)\big)
\end{equation}
Let
\begin{equation*}
\left\{
\begin{aligned}
F(\theta^{\e}_{ t}{\omega},\bar{X}^{\varepsilon},\bar{Y}^{\varepsilon})&=F\big(\bar{X}^{\varepsilon}(t)+\xi^{1, 1}(\theta^1_{ t}\omega_1),\bar{Y}^{\varepsilon}(t)+\eta(\theta^2_{\e t}{\omega_2})\big)\\
G(\theta^{\e}_{ t}{\omega},\bar{X}^{\varepsilon},\bar{Y}^{\varepsilon})&=G \big(\bar{X}^{\varepsilon}(t)+\xi^{1, 1}(\theta^1_{ t}\omega_1),\bar{Y}^{\varepsilon}(t)+\eta(\theta^2_{\e t}{\omega_2})\big)\\
\end{aligned}
\right.
\end{equation*}
Then the equation \eqref{rrde1} can be rewritten as
\begin{equation}
\label{rrde4}
\left\{
\begin{aligned}
\frac{d\bar{X}^{\varepsilon}(t)}{dt}&=  A\bar{X}^{\varepsilon}(t)
+ F(\theta^{\e}_{ t}{\omega},\bar{X}^{\varepsilon},\bar{Y}^{\varepsilon})\\
\frac{d\bar{Y}^{\varepsilon}(t)}{dt}&=\varepsilon B\bar{Y}^{\varepsilon}(t)+\varepsilon G(\theta^{\e}_{ t}{\omega}, \bar{X}^{\varepsilon},\bar{Y}^{\varepsilon}).
\end{aligned}
\right.
\end{equation}
The following theorem indicates the random dynamical system defined by \eqref{rrde4} has a random invariant manifold.

\begin{theorem}
Assue that the hypotheses $(\mathbf{H.1})$-$(\mathbf{H.4})$ hold.  Then for sufficiently small $ \varepsilon $ and sufficiently large $ T $, the random dynamical system defined by \eqref{rrde4} has a random invariant manifold,  with graph   defined by $ \gamma^{\e}(\omega)$.
\end{theorem}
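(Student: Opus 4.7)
The plan is to construct the random invariant manifold via a Lyapunov--Perron type fixed point argument, following the classical recipe adapted to the random slow--fast setting given in \eqref{rrde4}. The key observation is that the fast direction $\bar X^\e$ is driven by the dissipative matrix $A$ satisfying Hypothesis $\mathbf{H.1}$, while the slow direction $\bar Y^\e$ has vector field multiplied by $\e$, so that in a suitable exponentially weighted space only those trajectories living on the manifold remain bounded as $t\to-\infty$. Introduce the Banach space
\begin{equation*}
\cC_{\beta}^{-}=\Big\{z=(u,v)\in \mathcal{C}((-\infty,0];\Rn\times\Rm):\ \|z\|_{\beta}:=\sup_{t\leq 0}e^{\beta t}|z(t)|<\infty\Big\},
\end{equation*}
with a weight $\beta$ chosen so that $L<\beta<M_A$, which is possible by Hypothesis $\mathbf{H.3}$.

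Next, for fixed $\omega\in\Omega$ and fixed endpoint $\zeta\in\Rm$ at $t=0$ for the slow component, I would define the Lyapunov--Perron operator $\mathcal{J}^{\e}_{\omega,\zeta}:\cC_{\beta}^{-}\to \cC_{\beta}^{-}$ by
\begin{equation*}
(\mathcal{J}^{\e}_{\omega,\zeta}z)(t)=\left(\int_{-\infty}^{t}e^{A(t-s)}F\bigl(\theta_{s}^{\e}\omega,u(s),v(s)\bigr)\,ds,\ \ e^{\e B t}\zeta+\e\int_{0}^{t}e^{\e B(t-s)}G\bigl(\theta_{s}^{\e}\omega,u(s),v(s)\bigr)\,ds\right).
\end{equation*}
The first coordinate uses the stable convolution on the fast direction (integrating from $-\infty$ thanks to the exponential bound coming from $-M_A$ in $\mathbf{H.1}$), while the second uses forward integration from $0$ to prescribe the slow endpoint. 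I would then verify that $\mathcal{J}^{\e}_{\omega,\zeta}$ actually lands in $\cC_{\beta}^{-}$, using the boundedness $\mathbf{H.4}$ (which controls $F$ and $G$ uniformly, hence makes the convolutions converge in the weighted norm) together with the fact that $-B$ has no spectrum on the imaginary axis in $\mathbf{H.1}$, ensuring that $e^{\e Bt}$ does not blow up faster than the weight when $\e$ is small and the time window is appropriately arranged.

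The central step is the contraction estimate. Using $\mathbf{H.2}$, the Lipschitz difference of the first coordinate gains a factor $L/(M_A-\beta)$ from the dissipative exponential, and the slow coordinate contributes a factor of order $\e$ times a constant depending on $\|B\|$ and $L$. Combining the two and choosing $\beta\in(L,M_A)$ yields a contraction constant strictly less than $1$ provided $\e$ is sufficiently small, which is exactly the regime of the statement; here the spectral gap $\mathbf{H.3}$ is essential. By Banach's fixed point theorem there is a unique $z^{\e}(\cdot,\omega,\zeta)=(u^{\e}(\cdot,\omega,\zeta),v^{\e}(\cdot,\omega,\zeta))\in \cC_{\beta}^{-}$. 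Define
\begin{equation*}
\gamma^{\e}(\omega)(\zeta):=u^{\e}(0,\omega,\zeta),\qquad \mathcal{M}^{\e}(\omega):=\{(\gamma^{\e}(\omega)(\zeta),\zeta):\zeta\in\Rm\}.
\end{equation*}

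Finally, standard consequences of the fixed point construction give the remaining properties: Lipschitz continuity of $\zeta\mapsto \gamma^{\e}(\omega)(\zeta)$ follows by subtracting two Lyapunov--Perron equations with different slow endpoints and using the same contraction estimate; measurability of $\omega\mapsto \gamma^{\e}(\omega)(\zeta)$ follows because the fixed point is obtained as an a.s.\ limit of the measurable Picard iterates $\mathcal{J}^{\e}_{\omega,\zeta}$; and invariance under $\phi^{\e}$ in the sense of \eqref{invariant set} is verified by translating the bounded past trajectory by $\theta_t$, so that the shifted trajectory is again the unique fixed point corresponding to $\theta_t\omega$. The transformation \eqref{T} together with Lemma~1 in the Appendix then transports this manifold back to the original system. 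The main obstacle I expect is calibrating $\beta$ and the smallness of $\e$ so that the two-scale operator is simultaneously a self-map and a contraction on $\cC_{\beta}^{-}$; this is the place where $\mathbf{H.3}$ and $\mathbf{H.4}$ are indispensable, and where the non-Gaussian character enters only through the (pathwise) stationary solutions $\xi^{1,1}(\theta_t^1\omega_1)$ and $\eta(\theta_t^2\omega_2)$, which affect the random vector fields $F(\theta^{\e}_{t}\omega,\cdot,\cdot)$, $G(\theta^{\e}_{t}\omega,\cdot,\cdot)$ but leave the deterministic contraction estimate intact.
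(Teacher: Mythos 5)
Your proposal is essentially correct, but it takes a genuinely different route from the paper. The paper follows the Hadamard/graph-transform strategy of Schmalfuss--Schneider: it first solves the nonstandard boundary value problem \eqref{rrde55} on a finite interval $[0,T]$ by a contraction in the weighted norms $\|\cdot\|_{1,\beta},\|\cdot\|_{2,\beta}$ (Lemma~4 of the Appendix), uses this to define the random graph transform $\psi^{\varepsilon}(T,\omega,\cdot)$ and shows it maps $\mathcal{L}_{\kappa}$ into itself for $T$ large and $\varepsilon$ small (Lemma~6); the invariant graph $\gamma^{\varepsilon}$ is then the fixed point of the induced operator $\mathcal{A}^{\varepsilon}$ in \eqref{define}, and the proof of the theorem in the main text is just the cocycle computation showing $\psi^{\varepsilon}(t,\omega,\gamma^{\varepsilon}(\omega))=\gamma^{\varepsilon}(\theta^{\varepsilon}_t\omega,\cdot)$. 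You instead run a Lyapunov--Perron argument (in the spirit of the cited work of Fu, Liu and Duan): the graph is obtained directly as the time-zero fast component of the unique fixed point of an integral operator on an exponentially weighted space over $(-\infty,0]$, with the fast coordinate integrated from $-\infty$ and the slow coordinate anchored at the prescribed endpoint $\zeta$. This buys you a construction that needs only $\varepsilon$ small (the ``sufficiently large $T$'' in the statement is an artifact of the graph-transform route), and it delivers Lipschitz continuity, measurability and invariance by the standard fixed-point manipulations you describe; what it does not give as directly is the quantitative control of the Lipschitz constant $\kappa^{*}$ of the graph and the setup that the paper later reuses for the exponential tracking property (Lemma~8), which the graph-transform machinery produces along the way.

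One concrete slip: your claim that choosing $\beta\in(L,M_A)$ makes the operator a contraction is not right in general. Your own estimates give the contraction constant $\frac{L}{M_A-\beta}+\frac{\varepsilon L}{\beta-\varepsilon\|B\|}$, so you need $\beta<M_A-L$ (together with $\beta>\varepsilon\|B\|$, which holds for $\varepsilon$ small); if $L<M_A<2L$, no $\beta$ in your proposed interval $(L,M_A)$ works, since then $L/(M_A-\beta)>1$ for every such $\beta$. The correct calibration is any $\beta\in(0,M_A-L)$, which is nonempty precisely by Hypothesis $\mathbf{H.3}$; there is no need for $\beta>L$, because the slow equation carries the prefactor $\varepsilon$. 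With this adjustment (and the matching condition $\beta<M_A$ used to discard the boundary term $e^{A(t-\tau)}u(\tau)$ as $\tau\to-\infty$ in the characterization of bounded backward orbits), your argument goes through.
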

\begin{proof}
Assume $\mathcal{A}^{\varepsilon}$ has a unique fixed point $ \gamma^{\varepsilon}$, where $\mathcal{A}^{\varepsilon}$ is defined in the appendix \eqref{define}. Hence
replacing $ \omega $ by $ \theta^{\varepsilon}_{T}\omega$, we have
\begin{equation}
\psi^{\varepsilon}(t,\omega,\gamma^{\varepsilon})=\gamma^{\varepsilon}(\theta^{\varepsilon}_{T}\omega, \cdot).
\end{equation}
Therefore for $ t > 0 $, we have
\begin{equation}
\begin{aligned}
\psi^{\varepsilon}\big(t,\omega,\gamma^{\varepsilon}(\omega)\big)&=\psi^{\varepsilon}\big(t,\cdot,\mathcal{A}^{\varepsilon}
\big(\gamma^{\varepsilon}(\cdot)\big)\big)(\omega)\\
&=\psi^{\varepsilon}\big(t,\varepsilon, \psi^{\varepsilon}(T, \theta^{\varepsilon}_{-T}\omega,
\gamma^{\varepsilon}(\theta^{\varepsilon}_{-T}\omega))\big)\\
&=\psi^{\varepsilon}(t+T,\theta^{\varepsilon}_{-T}\omega, \gamma^{\varepsilon}(\theta^{\varepsilon}_{-T}\omega))\\
&=\psi^{\varepsilon}\big(T,\theta^{\varepsilon}_{-T+t}\omega, \psi^{\varepsilon}\big(T, \theta^{\varepsilon}_{-T}\omega,\gamma^{\varepsilon}(\theta^{\varepsilon}_{-T}\omega)\big)\big)\\
&=\mathcal{A}^{\varepsilon}\big(\psi^{\varepsilon}(t, \theta^{\varepsilon}_{-t}\cdot,\gamma^{\varepsilon}(\theta^{\varepsilon}_{-t}\cdot )\big)(\theta^{\varepsilon}_t\omega) \\
&=\gamma^{\varepsilon}(\theta^{\varepsilon}_t).
\end{aligned}
\end{equation}
Therefore the random dynamical system \eqref{rrde4} has a random invariant Lipschitz manifold defined by $\big\{\big(\gamma^{\varepsilon}(\omega, \tilde{y}),\tilde{y}\big)\big\}$. %Use a similar method from \cite[Lemma 4.11]{BS}, the manifold is exponentially attracting.
\end{proof}
\begin{remark}
Thus the systems derived from \eqref{rdem} has a random invariant manifold with the same graph.
\end{remark}

Based on the relationship between \eqref{rde110}and \eqref{rdem}, it holds that the system \eqref{rde110} has a random invariant manifold
\begin{equation}
M^{\varepsilon}(\omega)=\{\gamma^{\varepsilon}(\omega, \widetilde{y})+\xi^{1, \frac{1}{\varepsilon}}(\omega_1),\widetilde{y}+\eta(\omega_2)\}.
\end{equation}
Therefore we can get the following reduction system which describes the behavior for system \eqref{rde110}.
\begin{theorem}
Assume  that the assumptions $(\mathbf{H.1})$-$(\mathbf{H.4})$ hold. Then for any solution $ z^{\epsilon}(t)= (X^{\epsilon}_t, Y^{\epsilon}_t)$ to system \eqref{rde110} with initial data $ z^{\epsilon}(0)= (X^{\epsilon}_0, Y^{\epsilon}_0)$, there exists the following reduced low dimensional systems on the random invariant manifold
\begin{equation}
\label{redda}
\left\{
\begin{aligned}
d{y^\varepsilon _t} &= By^\varepsilon_t dt+{G}({x^\varepsilon_t},{y^\varepsilon _t})dt +\sigma_2 dL_2^{{\alpha _2}},\\
{x^\varepsilon _t} &=\gamma^{\varepsilon}(\theta^{\varepsilon}_t \omega, {y^\varepsilon _t}-\eta(\theta^{2}_t\omega_2))+\xi^{1, \frac{1}{\varepsilon}}(\theta^1_t\omega_1),
\end{aligned}
\right.
\end{equation}
such that for sufficiently  small $ \e$, sufficiently large $ t$ and some positive constant $\tilde{\kappa}$, we have
\begin{equation}
||z^{\varepsilon}(t,\omega)-\widetilde{z}^{\varepsilon}(t,\omega)||_{\infty} \leq C_{\e,\tilde{\kappa}}e^{-\frac{\tilde{\kappa}}{\varepsilon} t}||z^{\varepsilon}(0,\omega)-\widetilde{z}^{\varepsilon}(0,\omega)||_{\infty},
\end{equation}
where $ \widetilde{z}^{\epsilon}(t)= (x^{\epsilon}_t, y^{\epsilon}_t)$  is the solution of the low dimensional system \eqref{redda} with initial data $ \widetilde{z}^{\epsilon}(0)= (x^{\epsilon}_0, y^{\epsilon}_0)$.
\begin{proof}
Lemma 8 from appendix  tells us that the system \eqref{rdem} has an exponentially tracking manifold, then so has the  system \eqref{rde110}.
\end{proof}
\end{theorem}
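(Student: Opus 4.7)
The plan is to upgrade the random invariant manifold $\gamma^\varepsilon$ constructed in the previous theorem to an \emph{exponentially tracking} manifold for the auxiliary system \eqref{rdem}, and then transfer the resulting estimate back to the original system \eqref{rde110} via the pathwise identity \eqref{T}. The reduced equations \eqref{redda} arise by restricting the dynamics of \eqref{rde110} to the invariant manifold $M^\varepsilon(\omega) = \{\gamma^\varepsilon(\omega,\widetilde y) + \xi^{1,1/\varepsilon}(\omega_1),\widetilde y + \eta(\omega_2)\}$, so their derivation is immediate once the manifold is in hand; the real content is the quantitative tracking bound.

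First, for any solution $(\bar x^\varepsilon,\bar y^\varepsilon)$ of \eqref{rdem} I would construct a companion trajectory $(\hat x^\varepsilon,\hat y^\varepsilon)$ that lives on the manifold for all $t\ge 0$ and asymptotically shadows $(\bar x^\varepsilon,\bar y^\varepsilon)$. The shadow is selected by choosing its initial slow coordinate $\hat y^\varepsilon(0)$ as the fixed point of a contraction on a weighted Banach space of functions decaying like $e^{-\tilde\kappa t/\varepsilon}$; this is the classical \emph{asymptotic phase} construction for slow--fast systems and is exactly what Lemma 8 of the appendix is expected to supply. The spectral gap hypothesis $\mathbf{H.3}$ ensures that the fast contraction rate $M_A$ from $\mathbf{H.1}$ dominates the Lipschitz constant $L$ from $\mathbf{H.2}$, leaving a margin from which the decay exponent $\tilde\kappa \in (0, M_A - L)$ is drawn.

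Next, I would quantify the distance $(\bar x^\varepsilon - \hat x^\varepsilon,\, \bar y^\varepsilon - \hat y^\varepsilon)$ by a variation-of-constants formula and a coupled Gr\"onwall argument: the fast equation contracts at rate $M_A/\varepsilon$ while acquiring a cross term of size $(L/\varepsilon)|\bar y^\varepsilon - \hat y^\varepsilon|$ from $F$, and the slow equation contributes at most $L$ from $G$ together with $\|B\|$ from the linear part. After reversing the rescaling $t\mapsto\varepsilon t$ used to pass to \eqref{rrde4}, one recovers decay at rate $\tilde\kappa/\varepsilon$ on the original time scale. Finally, because \eqref{T} is merely a pathwise shift by the stationary processes $\xi^{1,1/\varepsilon}(\theta^1_t\omega_1)$ and $\eta(\theta^2_t\omega_2)$, the difference $z^\varepsilon-\widetilde z^\varepsilon$ equals $(\bar x^\varepsilon-\hat x^\varepsilon,\,\bar y^\varepsilon-\hat y^\varepsilon)$ pathwise, and the estimate transfers verbatim to \eqref{rde110}.

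The main obstacle is ensuring that the Lipschitz constant of $\gamma^\varepsilon$ is small enough, uniformly in $\varepsilon$, for both the manifold-defining fixed point and the tracking fixed point to contract on a common weighted function space. This is precisely what forces the ``sufficiently small $\varepsilon$'' and ``sufficiently large $T$'' clauses, and it also determines the $\varepsilon$-dependent prefactor $C_{\varepsilon,\tilde\kappa}$. The heavy tails of the $\alpha$-stable noise pose no extra difficulty for the contraction arguments, since the noise has already been absorbed into the stationary Ornstein--Uhlenbeck-type processes $\xi^{1,1/\varepsilon}$ and $\eta$; one only needs that these are almost surely finite-valued pathwise, which is standard when $\alpha\in(1,2)$.
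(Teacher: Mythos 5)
Your proposal follows essentially the same route as the paper: the paper's proof simply invokes the exponential tracking lemma in the appendix (proved there by the asymptotic-phase/fixed-point argument of Fu--Liu--Duan) for the transformed system \eqref{rdem} and then transfers the bound to \eqref{rde110} through the stationary-orbit shift \eqref{T} and the time rescaling, which is exactly the structure you describe, only you unpack the tracking lemma's contraction argument instead of citing it. Your sketch is therefore a correct, more explicit version of the paper's (very terse) proof.
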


\section{Approximation analysis for nonlinear filter}
In this  section, we study the nonlinear filtering problem on the random invariant manifold. For $T>0$, an observation system  is given by
\begin{equation}
dZ^{\e}_t=  h(Y^\e_t) dt+dW_t , \quad t\in[0,T],
\end{equation}
where $W_t$ is a standard Brownian motion independent of $L^{\alpha_1}_t$ and $L^{\alpha_2}_t$. In practical applications, we can only observe the slow component $Y^\e_t$. Let $(\Omega, \mathscr{F},\mathbb{P} )$ be a probability space together with a filtration $(\mathscr{F}_{t})_{t\geq 0}$ which satisfies the usual conditions. For the observation system $Z^{\e}_t$, we make the following additional hypothesis.

\par
{\bf Hypothesis  H.5 }
 The sensor function $h$ is bounded and Lipschitz continuous in $x$, with Lipschitz constant   denoted by $\|h\|_{Lip}$.

Let
\begin{equation}
\mathcal{Z}_t=\sigma(Z^\e_s:0\leq s\leq t)\vee \mathcal{N},
\end{equation}
where $\mathcal{N}$ is the  collection of all $\mathbb{P}$ -negligible sets of $(\Omega,\mathscr{F})$.

By the  Girsanov's change of measure theorem, we obtain a new probability measure $\widetilde{\mathbb{P}}$, such that the
observation $Z^{\e}_t$ becomes $\widetilde {\mathbb{P}}$-independent of the signal variables $(X^{\varepsilon}_t,Y^{\varepsilon}_t)$. In fact, this can be done through
\begin{equation}
\left.\frac{{d\widetilde {\mathbb{P}}}}{{d\mathbb{P}}}\right|_{\mathscr{F}_t}= \exp\left(-\sum\limits_{i = 1}^m {\int_0^t {{h^i}({Y^\e_s})dW_s^i} }- \frac{1}{2}\sum\limits_{i = 1}^m {\int_0^t {{h^i}{{({Y^\e_s})}^2}} } ds\right)=:\left(\mathbb{R}^{\e}_t\right)^{-1}.
\end{equation}
Hence  we have
\begin{equation}
\mathbb{R}^{\e}_t=\left.\frac{{d\mathbb{P}}}{{d\widetilde{\mathbb{P}}}}\right|_{\mathscr{F}_t}=\exp\left(\sum\limits_{i = 1}^m {\int_0^t {{h^i}({Y^\e_s})d(Z_s^{\e})^{i}} }- \frac{1}{2}\sum\limits_{i = 1}^m {\int_0^t {{h^i}{{({Y^\e_s})}^2}} } ds\right).
\end{equation}
Define
\begin{equation}
\begin{aligned}
\rho^{\e}_t(\phi)&:=\widetilde{\mathbb{E}}\big[\phi(Y^{\varepsilon}_t)\mathbb{R}^{\e}_t|\mathcal{Z}_t\big],\\
\pi^{\e}_t(\phi)&:=\mathbb{E}\big[\phi(Y^{\varepsilon}_t)|\mathcal{Z}_t\big],
\end{aligned}
\end{equation}
where $\widetilde {\mathbb{E}}$ stands for the expectation under $\widetilde{\mathbb{P}}$ and $\phi \in \mathcal{C}_b^1(\mathbb{R}^m)$ .

Then by the Kallianpur-Striebel formula, we have
\begin{equation}
 \pi^{\e}_t(\phi)=\frac{\rho^{\e}_t(\phi)}{\rho^{\e}_t(1)}.
\end{equation}
Here $\pi^{\e}_t(\phi)$ is called the normalized filtering  of $Y^{\varepsilon}_{t}$ with respect to $\mathcal{Z}_t$.

On the other hand,  we can rewrite the reduced system (\ref{redda}) as
\begin{equation}
\label{redd12}
d{\widetilde{Y}}^\e_t=B\widetilde{Y}^\e_tdt+\widetilde{G}^\e(\omega, \widetilde{Y}^\e_t)dt+\sigma_2d{L_2^{{\alpha _2}}}
\end{equation}
with
\begin{equation}
\widetilde{G}^\e(\omega, \widetilde{Y}^\e_t):=G\big(\gamma^{\varepsilon}\big(\theta^{\varepsilon}_t \omega, {\widetilde{Y}^\varepsilon _t}-\eta(\theta^{2}_t\omega_2)\big)+\xi^{1, \frac{1}{\varepsilon}}(\theta^1_t\omega_1),\widetilde{Y}^\e_t\big).
\end{equation}
In the following, we are more interested in the reduced filtering problem with the actual observation, i.e., we will study the nonlinear filtering problem for the reduced system \eqref{redd12} with the actual observation $Z^{\e}_t$. Set
\begin{equation}
\widetilde{\mathbb{R}}^\e_t:=\exp\left\{\int_0^t h(\widetilde{Y}^\e_s)dZ^{\e}_s-\frac12\int_0^t|h(\widetilde{Y}^\e_s)|^2 ds\right\},
\end{equation}
then $\widetilde{\mathbb{R}}^\e_t$ is an exponential martingale under $\widetilde {\mathbb{P}}$. Thus, we can define the
``formally'' non-normalized filtering for $\widetilde{Y}^\e_t$ by
\begin{equation}
\widetilde{\rho}_t^\e(\phi) :=\widetilde{\mathbb{E}}[\phi(\widetilde{Y}^\e_t)\widetilde{\mathbb{R}}^\e_t|\mathcal{Z}_t].
\end{equation}
And set
\begin{equation}
\widetilde{\pi}_t^\e(\phi):=\frac{\widetilde{\rho}_t^\e(\phi)}{\widetilde{\rho}_t^\e(1)},
\end{equation}
then $\widetilde{\pi}_t^\e$ could be understood as the nonlinear filtering problem for $\widetilde{Y}^\e_t$ with respect to $\mathcal{Z}_t$.
%\begin{lemma}
%\label{es1}
%Under $(\bf{H_7})$, there exists a constant $C>0$ such that for $  \min(\frac{1}{4}\alpha_1,\frac{1}{4}\alpha_2)\textgreater   ~p  \textgreater 0 $
%\begin{equation}
%\mathbb{E}\left|\widetilde{\rho}^{\e}_t(1)\right|^{-p} \leq \exp\left\{(2p^2+p+1)CT/2\right\}, \quad t\in[0,T].
%\end{equation}
%\end{lemma}
%\begin{proof}
%The proof is the same as \cite[Lemma 5.1]{HJ}.
%\end{proof}
%\begin{lemma}
%\label{es2}
%Assume that $(\bf{H_1})$--$(\bf{H_7})$ are satisfied. Then for $\phi\in \mathcal{C}^1_b(\mathbb{R}^n)$,
%$  \min(\frac{1}{4}\alpha_1,\frac{1}{4}\alpha_2)\textgreater   ~p  \textgreater 0 $ and $ t \in [0, T]$, we have
%\begin{equation}
%\mathbb{E}\left|\rho^{\e}_t(\phi)-\widetilde{\rho}^{\e}_t(\phi)\right|^p
%\leq 2^{(2p-1)/2} \exp(CT/2)||\phi||^{p}\big(\widetilde{\mathbb{E}}|z^\e(0)-\widetilde{z}^\e(0)|^{4p}\big)^{\frac{1}{4}}\left[e^{p(4p-1)CT}||N(\varepsilon, T)||^{2p}_{\infty}+C M(\varepsilon)\right]^{\frac{1}{2}},
%\end{equation}
%where the constant $C>0$ is independent of $\e$.
%\end{lemma}
%\begin{proof}
%
%\end{proof}

Now, we are ready to state and prove the main result in this paper.
\begin{theorem}
Assume the hypotheses  $(\bf{H.1})$--$(\bf{H.5})$ hold.  Then for $ \forall \min(\frac{1}{4}\alpha_1,\frac{1}{4}\alpha_2)\textgreater   ~p  \textgreater 0 $,  $\e$ sufficiently small and $t\in[0,T]$,  there exists a positive constant $C$ such that for
$\phi\in \mathcal{C}_b^1(\mathbb{R}^m)$
\begin{equation}
\mathbb{E}|\pi^{\e}_t(\phi)- \widetilde{\pi}_t^\e(\phi)|^p
\leq C ||\phi||^{p}\big(\mathbb{E}||z^{\e}(0,\omega)-\widetilde{z}^{\e}(0,\omega)||^{16p}_{\infty}\big)^{\frac{1}{16}}\left(e^{\frac{-4\tilde{\kappa} tp}{\varepsilon}}+\varepsilon \right)^{\frac{1}{4}}.
\end{equation}
\end{theorem}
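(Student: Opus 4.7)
The plan is to reduce the normalized filter difference to a difference of unnormalized filters via the Kallianpur-Striebel identity, and then to apply the pathwise exponential tracking estimate of Theorem 2 under the reference measure $\widetilde{\mathbb{P}}$, with respect to which $Z^{\varepsilon}$ is a Brownian motion independent of the signal. Writing
\begin{equation*}
\pi^{\varepsilon}_t(\phi)-\widetilde{\pi}^{\varepsilon}_t(\phi)
=\frac{\rho^{\varepsilon}_t(\phi)-\widetilde{\rho}^{\varepsilon}_t(\phi)}{\rho^{\varepsilon}_t(1)}
+\widetilde{\pi}^{\varepsilon}_t(\phi)\cdot\frac{\widetilde{\rho}^{\varepsilon}_t(1)-\rho^{\varepsilon}_t(1)}{\rho^{\varepsilon}_t(1)},
\end{equation*}
and using $|\widetilde{\pi}^{\varepsilon}_t(\phi)|\leq\|\phi\|$, it is enough to estimate $\mathbb{E}[|\rho^{\varepsilon}_t(\psi)-\widetilde{\rho}^{\varepsilon}_t(\psi)|^p]$ for $\psi=\phi$ and $\psi\equiv 1$, together with inverse moments of $\rho^{\varepsilon}_t(1)$; the latter are controllable in every $L^q(\widetilde{\mathbb{P}})$ because $h$ is bounded (Hypothesis \textbf{H.5}), so Jensen's inequality supplies a deterministic lower bound for the exponent of $\rho^{\varepsilon}_t(1)$.

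Next, under $\widetilde{\mathbb{P}}$ the decomposition
\begin{equation*}
\rho^{\varepsilon}_t(\phi)-\widetilde{\rho}^{\varepsilon}_t(\phi)
=\widetilde{\mathbb{E}}\bigl[(\phi(Y^{\varepsilon}_t)-\phi(\widetilde{Y}^{\varepsilon}_t))\mathbb{R}^{\varepsilon}_t\,\bigm|\,\mathcal{Z}_t\bigr]
+\widetilde{\mathbb{E}}\bigl[\phi(\widetilde{Y}^{\varepsilon}_t)(\mathbb{R}^{\varepsilon}_t-\widetilde{\mathbb{R}}^{\varepsilon}_t)\,\bigm|\,\mathcal{Z}_t\bigr]
\end{equation*}
separates the problem into a signal-difference piece and a likelihood-difference piece. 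The signal-difference piece is controlled by the Lipschitz property of $\phi$ together with the pathwise tracking inequality of Theorem 2, giving $|\phi(Y^{\varepsilon}_t)-\phi(\widetilde{Y}^{\varepsilon}_t)|\leq \|\phi\|\, C_{\varepsilon,\tilde\kappa}\, e^{-\tilde\kappa t/\varepsilon}\|z^{\varepsilon}(0)-\widetilde{z}^{\varepsilon}(0)\|_\infty$, which after raising to the $p$th power yields the $e^{-4\tilde\kappa tp/\varepsilon}$ factor in the final estimate. The likelihood-difference piece is handled by expressing $\mathbb{R}^{\varepsilon}_t-\widetilde{\mathbb{R}}^{\varepsilon}_t$ as integrals in $dZ^{\varepsilon}_s$ and $ds$ of $h(Y^{\varepsilon}_s)-h(\widetilde{Y}^{\varepsilon}_s)$ and $|h(Y^{\varepsilon}_s)|^2-|h(\widetilde{Y}^{\varepsilon}_s)|^2$, and then applying the Lipschitz property of $h$ and the tracking inequality once more; Novikov's criterion, available thanks to the boundedness of $h$, supplies finite moments of all orders of $\mathbb{R}^{\varepsilon}_t$ and $\widetilde{\mathbb{R}}^{\varepsilon}_t$ under $\widetilde{\mathbb{P}}$.

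The $L^p$ assembly then proceeds by iterating H\"older's inequality (together with one Burkholder-Davis-Gundy step for the $dZ^{\varepsilon}$-stochastic integral) in such a way as to isolate, successively, the exponential martingales, the inverse moment $\rho^{\varepsilon}_t(1)^{-p}$, the $\alpha_i$-stable moments born of the tracking bound, and the initial displacement norm. The factor $16$ in the exponent on the initial displacement is precisely the product of the four H\"older splits, while the constraint $p<\min(\alpha_1/4,\alpha_2/4)$ ensures that the $\alpha_i$-stable moments of order $4p$ produced along the way remain finite. The residual additive $\varepsilon$ in $(e^{-4\tilde\kappa tp/\varepsilon}+\varepsilon)^{1/4}$ encodes the rescaling-in-distribution error accrued in passing from \eqref{rrde} to \eqref{rrde1}, i.e., the replacement of $\xi^{1,1/\varepsilon}(\theta^1_{\varepsilon t}\omega_1)$ by $\xi^{1,1}(\theta^1_t\omega_1)$, which contributes an $O(\varepsilon)$ time-averaging residue through the bounded drift $F$.

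The main obstacle is the careful choice of H\"older exponents: one must simultaneously keep each $\alpha_i$-stable moment strictly below the integrability threshold $\alpha_i$, absorb the exponential martingales and the inverse moments of $\rho^{\varepsilon}_t(1)$ into the constant $C$, and preserve the exponential-in-time decay rate $e^{-\tilde\kappa t/\varepsilon}$ through both the Burkholder-Davis-Gundy step and the conditional expectation with respect to $\mathcal{Z}_t$ without loss.
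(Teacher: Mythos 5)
Your proposal follows essentially the same route as the paper's proof: change of measure to $\widetilde{\mathbb{P}}$, the Kallianpur--Striebel decomposition of $\pi^{\e}_t(\phi)-\widetilde{\pi}^{\e}_t(\phi)$ with inverse moments of the normalizer controlled through the boundedness of $h$, the splitting of $\rho^{\e}_t(\phi)-\widetilde{\rho}^{\e}_t(\phi)$ into a signal-difference term (handled by the Lipschitz property of $\phi$ and the exponential tracking estimate) and a likelihood-ratio term (handled by BDG, the Lipschitz property of $h$ and Gr\"onwall), and iterated H\"older splits producing the exponent $16$ and the restriction $p<\min(\alpha_1/4,\alpha_2/4)$. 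The only difference is cosmetic: you divide by $\rho^{\e}_t(1)$ and bound $\widetilde{\pi}^{\e}_t(\phi)$ by $\|\phi\|$, whereas the paper divides by $\widetilde{\rho}^{\e}_t(1)$ and bounds $\pi^{\e}_t(\phi)$, a symmetric variant of the same identity.
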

\begin{proof}
Step 1.  As in  \cite{HJ}, we have
\begin{equation}
\mathbb{E}\left|\widetilde{\rho}^{\e}_t(1)\right|^{-p} \leq \exp\left\{(2p^2+p+1)CT/2\right\}, \quad t\in[0,T].
\end{equation}

Step 2. For $\phi\in \mathcal{C}^1_b(\mathbb{R}^m)$, it follows from the H\"older inequality that
\begin{equation}
\begin{aligned}
\mathbb{E}\left|\rho^{\e}_t(\phi)-\widetilde{\rho}^{\e}_t(\phi)\right|^p &=\widetilde{\mathbb{E}}\left[\left|\rho^{\e}_t(\phi)-\widetilde{\rho}^{\e}_t(\phi)\right|^p
\mathbb{R}^\e_t\right] \\
&\leq \left(\widetilde{\mathbb{E}}\left[\left|\rho^{\e}_t(\phi)-\widetilde{\rho}^{\e}_t(\phi)\right|^{2p}\right]\right)^{\frac{1}{2}}
\big(\widetilde{\mathbb{E}}\big[\mathbb{R}^\e_t\big]^{2}\big)^{\frac{1}{2}} \\
&\leq \exp(CT/2)\left(\widetilde{\mathbb{E}}\left[\left|\rho^{\e}_t(\phi)-\widetilde{\rho}^{\e}_t(\phi)\right|^{2p}\right]\right)^{\frac{1}{2}}.
\end{aligned}
\end{equation}

Step 3. Using the definitions of $\rho^{\e}_t(\phi)$ and $\widetilde{\rho}^{\e}_t(\phi)$, Jensen's inequality and H\"older inequality, we get
\begin{equation}
\label{i1i2}
\begin{aligned}
\widetilde{\mathbb{E}}\left|\rho^{\e}_t(\phi)-\widetilde{\rho}^{\e}_t(\phi)\right|^{2p}&=
\widetilde{\mathbb{E}}\left|\widetilde{\mathbb{E}}[\phi(Y_t^\e)\mathbb{R}^\e_t|\mathcal{Z}_t]-\widetilde{\mathbb{E}}[\phi(\widetilde{Y}^\e_t)
\widetilde{\mathbb{R}}^\e_t|\mathcal{Z}_t]\right|^{2p}\\
&\leq2^{2p-1}\widetilde{\mathbb{E}}\left[\left|\phi(Y_t^\e)\mathbb{R}^\e_t-\phi(\widetilde{Y}^\e_t)\mathbb{R}^\e_t\right|^{2p}\right]
+2^{2p-1}\widetilde{\mathbb{E}}\left[\left|\phi(\widetilde{Y}_t^\e)\mathbb{R}^\e_t-\phi(\widetilde{Y}^\e_t)
\widetilde{\mathbb{R}}^\e_t\right|^{2p}\right]\\
&\leq2^{2p-1}||\phi||^{2p}e^{p(4p-1)CT}\left(\widetilde{\mathbb{E}}\left|Y_t^\e-\widetilde{Y}^\e_t\right|^{4p}\right)^{\frac{1}{2}}+ 2^{2p-1}\|\phi\|^{2p}\widetilde{\mathbb{E}}\left[\left|\mathbb{R}^\e_t-\widetilde{\mathbb{R}}^\e_t\right|^{2p}\right]\\
\end{aligned}
\end{equation}

Step 4. By the It\^o's formula, BDG inequality and bounded property of $h$, we have
\begin{equation}
\begin{aligned}
\widetilde{\mathbb{E}}\left[\left|\mathbb{R}^\e_t-\widetilde{\mathbb{R}}^\e_t\right|^{2p}\right]&=\widetilde{\mathbb{E}}\left[\left|\int_0^t\left(\mathbb{R}^\e_s
h(Y_s^\e)-\widetilde{\mathbb{R}}^\e_s h(\widetilde{Y}^\e_s)\right)d
Z_s^\e\right|^{2p}\right]\\
&\leq C\widetilde{\mathbb{E}}\left[\int_0^t\left|\mathbb{R}^\e_s
h(Y_s^\e)-\widetilde{\mathbb{R}}^\e_s h(\widetilde{Y}^\e_s)\right|^{2}ds\right]^{p}\\
&\leq 2^{2p-1}CT^{p-1}\int_0^t\widetilde{\mathbb{E}}\left|\mathbb{R}^\e_s
h(Y_s^\e)-{\mathbb{R}}^\e_s h(\widetilde{Y}^\e_s)\right|^{2p}ds\\
&+2^{2p-1}CT^{p-1}\int_0^t\widetilde{\mathbb{E}}\left|\mathbb{R}^\e_s
h(\widetilde{Y}_s^\e)-\widetilde{\mathbb{R}}^\e_s h(\widetilde{Y}^\e_s)\right|^{2p}ds\\
\end{aligned}
\end{equation}

Then we have
$$
\widetilde{\mathbb{E}}\left[\left|\mathbb{R}^\e_t-\widetilde{\mathbb{R}}^\e_t\right|^{2p}\right]\leq
C\big(\widetilde{\mathbb{E}}||z^\e(0)-\widetilde{z}^\e(0)||^{4p}_{\infty}\big)^{\frac{1}{2}}+C\int_0^t\widetilde{\mathbb{E}}\left|\mathbb{R}^\e_s-\widetilde{\mathbb{R}}^\e_s
\right|^{2p}d s.
$$
Using Gr\"onwall's inequality, we obtain that
\begin{equation}
\label{rr}
\widetilde{\mathbb{E}}\left[\left|\mathbb{R}^\e_t-\widetilde{\mathbb{R}}^\e_t\right|^{2p}\right]\leq C\big(\widetilde{\mathbb{E}}||z^\e(0)-\widetilde{z}^\e(0)||^{4p}_{\infty}\big)^{\frac{1}{2}}.
\end{equation}

Step 5. Combing the results from Step 3 and Step 4, we get
\begin{equation}
\mathbb{E}\left|\rho^{\e}_t(\phi)-\widetilde{\rho}^{\e}_t(\phi)\right|^p
\leq C||\phi||^{p}\big(\widetilde{\mathbb{E}}||z^\e(0)-\widetilde{z}^\e(0)||^{4p}_{\infty}\big)^{\frac{1}{4}}\left(e^{\frac{-2\tilde{\kappa} tp}{\varepsilon}} +\varepsilon\right)^{\frac{1}{2}}.
\end{equation}

Step 6. Using the relationships between $\pi^{\e}_t(\phi), ~\widetilde{\pi}_t^\e(\phi) $ and $ \rho^{\e}_t(\phi), \widetilde{\rho}^{\e}_t(\phi)$, we have
\begin{equation}
\begin{aligned}
\mathbb{E}|\pi^{\e}_t(\phi)- \widetilde{\pi}_t^\e(\phi)|^p &=\mE\left|\frac{\rho^{\e}_t(\phi)-\tilde{\rho}^{\e}_t(\phi)}{\tilde{\rho}^{\e}_t(1)}-\pi^{\e}_t(\phi)\frac{\rho^{\e}_t(1)-\tilde{\rho}^{\e}_t(1)}{\tilde{\rho}^{\e}_t(1)}\right|^{p}\\
&\leq2^{p-1}\mE\left|\frac{\rho^{\e}_t(\phi)-\tilde{\rho}^{\e}_t(\phi)}{\tilde{\rho}^{\e}_t(1)}\right|^{p}+2^{p-1}\mE\left|\pi^{\e}_t(\phi)\frac{\rho^{\e}_t(1)-\tilde{\rho}^{\e}_t(1)}{\tilde{\rho}^{\e}_t(1)}\right|^{p}\\
&\leq2^{p-1}\left(\mE\left|\rho^{\e}_t(\phi)-\tilde{\rho}^{\e}_t(\phi)\right|^{2p}\right)^{1/2}\left(\mE\left|\tilde{\rho}^{\e}_t(1)\right|^{-2p}\right)^{1/2}\\
&+2^{p-1}\|\phi\|^{p}\left(\mE\left|\rho^{\e}_t(1)-\tilde{\rho}^{\e}_t(1)\right|^{2p}\right)^{1/2}\left(\mE\left|\tilde{\rho}^{\e}_t(1)\right|^{-2p}\right)^{1/2}\\
&\leq  C ||\phi||^{p}\big(\mathbb{E}||z^{\e}(0,\omega)-\widetilde{z}^{\e}(0,\omega)||^{16p}_{\infty}\big)^{\frac{1}{16}}\left(e^{\frac{-4\tilde{\kappa} tp}{\varepsilon}}+\varepsilon \right)^{\frac{1}{4}}.
\end{aligned}
\end{equation}

\end{proof}
\begin{remark}
For a stochastic differential equation   with $\alpha$-stable L\'evy noise, the existence for its $p$ moment requires $1<p<\alpha$.
\end{remark}

\section{Conclusion}

We have presented a theoretical foundation for the development of effective filtering on a  random invariant manifold in complex non-Gaussian multiscale systems.   We can  further  extend this work to the case when the observation system is also driven by non-Gaussian noise. i.e.
\begin{equation}
dZ^{\e}_t=  h(Y^\e_t) dt+dL_t , \quad t\in[0,T],
\end{equation}
where $ L_t $ is a standard L\'evy process independent of $L_t^{\alpha_1}$ and $L_t^{\alpha_2}$.

Moreover,   we can extend this work  to slow-fast non-Gaussian stochastic \emph{partial} differential equations,  i.e.,  we consider the following fast-slow stochastic dynamical system
\begin{equation*}
\label{rde11}
\left\{
\begin{aligned}
d{X^\varepsilon _t} &=\frac{1}{\varepsilon}AX^\varepsilon_t dt+ \frac{1}{\varepsilon}{F}({X^\varepsilon_t},{Y^\varepsilon_t})dt + \frac{\sigma_1}{\varepsilon ^{\frac{1}{\alpha _1}}}dL_1^{{\alpha _1}},   \\
d{Y^\varepsilon _t} &= BY^\varepsilon_t dt+{G}({X^\varepsilon_t},{Y^\varepsilon _t})dt +\sigma_2 dL_2^{{\alpha _2}}.
\end{aligned}
\right.
\end{equation*}
Here $({X^\varepsilon_t},{Y^\varepsilon_t})$ is defined in two separable Hilbert space ${\mathbb{H}} \times {\mathbb{H}}$ -valued signal process which represents the fast and slow components. The interaction functions $ F: {\mathbb{H}} \times {\mathbb{H}}\rightarrow {\mathbb{H}}, G: {\mathbb{H}} \times {\mathbb{H}}\rightarrow {\mathbb{H}}$ are Borel measurable respectively. $A:\mathcal{D}(A)\subset H\mapsto H$ is a self-adjoint compact operator on $\mathbb{H}$ such that $-A$ has discrete spectrum $0<\mu_1<\mu_2<\cdots \mu_{k}<\cdots$ and $\lim_{k\rightarrow \infty}\mu_{k}=\infty$. $B:\mathcal{D}(B)\subset H\mapsto H$ is a linear unbounded operator on $\mathbb{H}$ such that $-B$ has discrete spectrum $0<\lambda_1<\lambda_2<\cdots \lambda_{k}<\cdots$ and $\lim_{k\rightarrow \infty}\lambda_{k}=\infty$. Both $\sigma_1$ and $ \sigma_2$ are nonzero real noise intensities.  The parameter $\varepsilon$ is the ratio of the slow time scale to the fast time scale. Non-Gaussian  processes  $L_1^{{\alpha _1}},L_2^{{\alpha _2}}$  (with $1<\alpha_1, \alpha_2 <2$) are a cylindrical $\alpha_1$-stable process and $\alpha_2$ stable process defined by the orthogonal expansion,respectively,
\begin{equation}
\begin{aligned}
L_1^{{\alpha _1}}:&=\sum_{k=1}^{\infty}\beta_{k}L_{k}(t)e_{k}, \\
L_2^{{\alpha _2}}:&=\sum_{k=1}^{\infty}q_{k}Z_{k}(t)e_{k}, \\
\end{aligned}
\end{equation}
where $\{e_{k}\}_{k\geq 1}$ is an orthonormal  basis of $H$, $\{L_{k}(t)\}_{k\geq 1}$ and $\{Z_{k}(t)\}_{k\geq 1}$ are sequences of independent and identically distributed real-value symmetric $\alpha_1$-stable processes and $\alpha_2$-stable processes defined on the stochastic basis $(\Omega, \mathcal{F}, \mathcal{F}_{t}, \mathbb{P})$, respectively, and $\beta_{k}, q_{k}> 0$ for each $k\geq 1$.

\section{Appendix}
In this Appendix, we   present   materials about the existence of a random invariant manifold. Firstly, we recall  the basic definition of stationary solution.
\begin{definition}
A random variable $\omega\mapsto y(\omega)$ with values in $H$ is called a stationary orbit (or random
fixed point) for a random dynamical system $\phi$ if
\begin{equation}
\phi(t,\omega,y(\omega))=y(\theta_{t}\omega), ~~a.s., \ \text{ for }\ t\in\mathbb{R^{+}},\ \omega\in\Omega.
\end{equation}
\end{definition}
In the following, we present an example to illustrate the abstract definition of stationary orbit.
\begin{example}(Stationary orbit for a Langevin equation) Consider a SDE
\begin{equation}
\label{orbito}
dX_t=-X_tdt+dL^{\alpha}_t, X_0=x.
\end{equation}
This SDE defines a random dynamical asystem
\begin{equation}
\phi(t,\omega,x)=e^{-t}x+\int_{0}^{t}e^{-(t-s)}dL^{\alpha}_s(\omega).
\end{equation}
A stationary orbit of this random dynamical system is
\begin{equation}
y(\omega)=\int_{-\infty}^{0}e^{s}dL^{\alpha}_s(\omega).
\end{equation}
Indeed, we have $\phi(t,\omega,y(\omega))=y(\theta_{t}\omega)$, i.e., $y(\omega)=\int_{-\infty}^{0}e^{s}dL^{\alpha}_s(\omega)$ is a stationary orbit for the random dynamical system \eqref{orbito}.

\end{example}
\begin{lemma}
\label{lemma1}
Under hypothesis $(\mathbf{H.1})$ and $ (\mathbf{H.2})$, the following linear stochastic differential equations
\begin{subequations}
\label{ori}
\begin{equation}
\label{oria}
d\xi^{\varepsilon}(t)=\frac{1}{\varepsilon} A\xi^{\varepsilon}(t) dt+\frac{\sigma_1}{\varepsilon^{\frac{1}{\alpha_1}}}dL_t^{\alpha_1}(\omega_1),~~\xi(0)=\xi_{0},
\end{equation}
\begin{equation}
\label{orib}
d\eta(t) =B\eta(t) dt+\sigma_2dL_t^{\alpha_2}(\omega_2), ~~ \eta(0)=\eta_0.
\end{equation}
\end{subequations}
have c\`{a}dl\`{a}g stationary orbits $\xi^{1, \frac{1}{\varepsilon}}(\omega_1)$ and $\eta(\omega_2)$ , respectively.
\begin{equation}
\left\{
\begin{aligned}
\xi^{1,\frac{1}{\varepsilon}}(\omega_1)&=\frac{\sigma_1}{\varepsilon^{\frac{1}{\alpha_1}}}\int_{-\infty}^{0} e^{\frac{-As}{\varepsilon}}dL_{s}^{\alpha_1}(\omega_1),\\
\eta(\omega_2)&=\sigma_2\int_{-\infty}^{0}e^{-Bs}dL_s^{\alpha_2}(\omega_2).
\end{aligned}
\right.
\end{equation}
\end{lemma}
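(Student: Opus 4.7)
The plan is to verify, directly from the explicit formulae given for $\xi^{1,1/\varepsilon}(\omega_1)$ and $\eta(\omega_2)$, the three properties needed: (a) the improper stochastic integrals converge and define $\mathcal{F}^1$- and $\mathcal{F}^2$-measurable random variables; (b) the candidate orbits are c\`adl\`ag in time when shifted by $\theta^i_t$; and (c) they satisfy the stationary orbit identity $\phi(t,\omega,y(\omega))=y(\theta_t\omega)$, which will at the same time force them to be solutions of \eqref{oria} and \eqref{orib} with the claimed initial data.

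First I would establish convergence of the improper integrals on $(-\infty,0]$. Hypothesis \textbf{H.1} gives $(Ax,x)\le -M_A|x|^2$, which via a standard Gr\"onwall argument applied to $|e^{At}x|^2$ yields the semigroup bound $\|e^{At}\|\le e^{-M_A t}$ for $t\ge 0$. Consequently, for $s\le 0$, $\|e^{-As/\varepsilon}\|\le e^{M_A s/\varepsilon}$, and the deterministic kernel norm satisfies
\[
\int_{-\infty}^{0}\|e^{-As/\varepsilon}\|^{\alpha_1}\,ds \;\le\; \frac{\varepsilon}{\alpha_1 M_A} \;<\; \infty.
\]
This is precisely the condition ensuring that the Wiener-type stochastic integral with respect to a symmetric $\alpha_1$-stable L\'evy process converges almost surely (standard theory in \cite{Duan}). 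The corresponding estimate for $\eta$ follows from the spectral condition on $B$ in \textbf{H.1}, after projecting $\mathbb{R}^m$ onto the stable subspace of $B$ if necessary; under the reading that $B$ itself is stable, one obtains $\|e^{Bt}\|\le Ce^{-\lambda_B t}$ for some $\lambda_B>0$, and the analogous $L^{\alpha_2}$ bound on the kernel is immediate.

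Next I would verify the stationarity identity by a change of variables $u=s+t$ combined with the defining property of the Wiener shift $\theta^1_t\omega_1(\cdot)=\omega_1(\cdot+t)-\omega_1(t)$. Substituting into the explicit formula yields
\[
\xi^{1,1/\varepsilon}(\theta^1_t\omega_1) \;=\; \frac{\sigma_1}{\varepsilon^{1/\alpha_1}}\int_{-\infty}^{t} e^{-A(u-t)/\varepsilon}\,dL_u^{\alpha_1}(\omega_1) \;=\; e^{At/\varepsilon}\xi^{1,1/\varepsilon}(\omega_1) + \frac{\sigma_1}{\varepsilon^{1/\alpha_1}}\int_{0}^{t} e^{A(t-u)/\varepsilon}\,dL_u^{\alpha_1}(\omega_1),
\]
which is precisely the mild-solution formula for \eqref{oria} starting from $\xi^{1,1/\varepsilon}(\omega_1)$ at time $0$. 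Therefore $\xi^{1,1/\varepsilon}$ is at once a solution of the SDE and a stationary orbit. The argument for $\eta$ is identical. C\`adl\`ag regularity of the orbit in $t$ is inherited from the c\`adl\`ag regularity of the driving L\'evy process, after choosing the appropriate modification of the stochastic integral.

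The main obstacle is the careful justification of the pathwise change-of-variables identity for a stochastic integral driven by a jump process over the semi-infinite interval $(-\infty,0]$. One has to construct a version of the integral that is jointly measurable in $(\omega,t)$ and c\`adl\`ag in $t$ -- for instance by truncating the lower limit to $[-n,0]$, showing the truncated integrals form a Cauchy sequence in probability using the $L^{\alpha_1}$ bound on the kernel, and then extracting a c\`adl\`ag modification of the limit -- and then to verify that the shift $\theta^1_t$ commutes with the integral up to the re-indexing used above. Once this measure-theoretic infrastructure is set up, the remainder of the proof reduces to the explicit computation indicated in the previous paragraph.
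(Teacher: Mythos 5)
Your proposal is correct and follows essentially the same route as the paper: the paper likewise verifies the stationary-orbit identity by applying the Wiener shift to the explicit integral formula, changing variables $s\mapsto s+t$, and recognizing the resulting expression as the mild-solution (variation-of-constants) formula started from the candidate orbit, doing the computation explicitly for $\eta$ and invoking the same argument for $\xi^{1,1/\varepsilon}$. Your additional justification of the convergence of the improper $\alpha$-stable integrals via $\int_{-\infty}^{0}\|e^{-As/\varepsilon}\|^{\alpha_1}ds<\infty$ and of the c\`adl\`ag modification is a welcome tightening of details the paper leaves implicit, but it does not change the underlying approach.
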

\begin{proof}
For $ \forall$  $ t\geq 0 $,
the SDE \eqref{orib} has unique c\`{a}dl\`{a}g solution
\begin{equation}\label{orbit}
\varphi(t,\omega_2,\eta_{0})=e^{Bt}\eta_{0}+\sigma_2\int_{0}^{t}e^{B(t-s)}dL_s^{\alpha_2}(\omega_2).
\end{equation}
Then we have
\begin{align*}
\varphi(t,\omega_2,\eta(\omega_2))&=e^{Bt}\eta(\omega_2)+\sigma_2\int_{0}^{t}e^{B(t-s)}dL_s^{\alpha_2}(\omega_2)\\
       &=\sigma_2e^{Bt}\int_{-\infty}^{0}e^{-Bs}dL_s^{\alpha_2}(\omega_2)+\sigma_2\int_{0}^{t}e^{B(t-s)}dL_s^{\alpha_2}(\omega_2)\\
       &=\sigma_2\int_{-\infty}^{t}e^{B(t-s)}dL_s^{\alpha_2}(\omega_2).
\end{align*}
On the other hand,
\begin{equation}
\begin{aligned}
\eta(\theta^2_{t}\omega_2)&=\sigma_2\int_{-\infty}^{0}e^{-Bs}dL_s^{\alpha_2}(\theta^2_t\omega_2) \\
&=\sigma_2\int_{-\infty}^{0}e^{-Bs}d\big(L_{t+s}^{\alpha_2}(\omega_2)-L_{t}^{\alpha_2}(\omega_2)\big) ~~~a.s.\\
 &=\sigma_2\int_{-\infty}^{0}e^{-Bs}dL_{t+s}^{\alpha_2}(\omega_2)  ~~~a.s.\\
 &=\sigma_2\int_{-\infty}^{t}e^{B(t-s)}dL_s^{\alpha_2}(\omega_2)\\
 &=\varphi(t,\omega_2,\eta(\omega_2)).
\end{aligned}
\end{equation}
Thus $\eta(\omega_2)$ is a stationary orbit for a random dynamical system $ \varphi$. By the same way, we have $ \xi^{1, \frac{1}{\varepsilon}}(\omega_1) $ is a stationary orbit for a random dynamical system generated by \eqref{oria}.
\end{proof}

\begin{remark}
Here we emphasize $``-B "$  has no eigenvalue on the imaginary axis.
\end{remark}

\begin{remark}
The process $ (t, \omega_1)\rightarrow \xi^{1, \frac{1}{\varepsilon}}(\theta^1_{\e t} {\omega_1})$  has the same distribution as the process $(t, \omega_1)\rightarrow \xi^{1, 1}(\theta^1_{ t} {\omega_1}) $  by the scale property of  $ \alpha $- stable process.
\end{remark}

Secondly, we provides some important pathwise properties for L\'evy process  with two-sided time $ t \in \mathbb{R}$, which comes from \cite[Lemma 1]{Xian}
\begin{lemma}(pathwise boundedness and convergence)\\
Let $L_t$ be a two-sided L\'evy process on $\mathbb{R}^{n}$ for which $\mathbb{E}|L_1| \textless \infty $ and $\mathbb{E}|L_1|= M$. Then
we have the following
\begin{enumerate}
\item[(1)]$\lim_{t\rightarrow \pm \infty }(1/t)L_t=M $,  a.s.;
\item[(2)]The integrals $\int_{-\infty}^{t}e^{-\lambda (t-s)}dL_s(\omega)$ are bounded in $\lambda \geq 1$ on finite time intervals $[T_1,T_2]$.
\end{enumerate}
In the following, we will give an example to illustrate the second conclusion of Lemma 1.
\begin{example}
The integral $\int_{-\infty}^{t}e^{-(t-s)}dB_s(\omega)$ is bounded on finite time intervals $[-1, 1]$.
\end{example}
\begin{proof}
Here we take $\lambda=1$, by the It\^o's isometry formula, then we have
\begin{equation}
\begin{aligned}
\mathbb{E}\left(\int_{-\infty}^{t}e^{-(t-s)}dB_s(\omega)\right)^{2}&=\int_{-\infty}^{t}e^{-2(t-s)}ds\\
&:=\frac{1}{2}e^{-2t}
\end{aligned}
\end{equation}
Obviously, $\frac{1}{2}e^{-2t}$ is a continuous function about $t$, it is bounded on finite time intervals $[T_1,T_2]$. Therefore the integral $\int_{-\infty}^{t}e^{-(t-s)}dB_s(\omega)$ is bounded on finite time intervals $[-1, 1]$.
\begin{equation}
\end{equation}
\end{proof}
\end{lemma}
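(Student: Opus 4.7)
The plan is to recognize that $Y_t := \int_{-\infty}^{t} e^{-(t-s)} dB_s(\omega)$ is the stationary Ornstein--Uhlenbeck process, namely the stationary solution of the Langevin equation $dY_t = -Y_t\,dt + dB_t$, and to deduce pathwise boundedness on $[-1,1]$ from the fact that this process admits an a.s.\ continuous modification, which is automatically bounded on any compact interval.

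First I would verify $Y_t \in L^2(\Omega)$ for each $t$ via It\^o's isometry,
\begin{equation*}
\mathbb{E}|Y_t|^{2} \;=\; \int_{-\infty}^{t} e^{-2(t-s)}\, ds \;=\; \tfrac{1}{2},
\end{equation*}
which is in fact independent of $t$ (this corrects the formula $\frac{1}{2}e^{-2t}$ written above). Since $Y$ is a centered Gaussian process with covariance $\mathbb{E}[Y_s Y_t] = \tfrac{1}{2} e^{-|t-s|}$, its increments satisfy $\mathbb{E}|Y_t - Y_s|^{2} = 1 - e^{-|t-s|} \leq |t-s|$, and Gaussian moment control then yields $\mathbb{E}|Y_t - Y_s|^{2k} \leq C_k |t-s|^{k}$ for every integer $k \geq 1$. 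Kolmogorov's continuity criterion with $k$ large enough produces an a.s.\ continuous modification of $Y$ on $[-1,1]$, and a continuous function on a compact interval is bounded, giving $\sup_{t \in [-1,1]} |Y_t(\omega)| < \infty$ for $\mathbb{P}$-a.e.\ $\omega$, which is the claim.

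The step requiring the most care is precisely the passage from an $L^2$ bound on $Y_t$ for each individual $t$ to pathwise boundedness of the whole trajectory, since the estimate $\mathbb{E}|Y_t|^{2} = \tfrac{1}{2}$ does not by itself prevent $|Y_t(\omega)|$ from blowing up along a sequence of times in $[-1,1]$; the Kolmogorov-continuity step is what removes this obstruction. An alternative route, bypassing Kolmogorov entirely, would be to split
\begin{equation*}
Y_t \;=\; e^{-(t+1)}\,\xi \;+\; \int_{-1}^{t} e^{-(t-s)}\, dB_s, \qquad \xi \;:=\; \int_{-\infty}^{-1} e^{-(-1-s)}\, dB_s \in L^{2}(\Omega),
\end{equation*}
where the first summand is a bounded deterministic function of $t$ on $[-1,1]$ times an a.s.\ finite random variable, while the second is an It\^o integral with a.s.\ continuous sample paths in $t$ on $[-1,1]$; summing the two yields the same conclusion.
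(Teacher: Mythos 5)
Your proof is correct, and it actually goes further than the paper's own argument, which is a pure moment computation: the paper applies It\^o's isometry, writes the answer as $\frac{1}{2}e^{-2t}$ (an arithmetic slip --- as you note, $\int_{-\infty}^{t}e^{-2(t-s)}ds=\frac{1}{2}$ independently of $t$), observes that this deterministic function of $t$ is bounded on compact intervals, and stops there. That argument only yields boundedness of the second moment, i.e.\ boundedness in mean square, and does not by itself rule out pathwise blow-up of $|Y_t(\omega)|$ along a sequence of times in $[-1,1]$ --- exactly the gap you flag. Your route closes it: identifying $Y_t$ as the stationary Ornstein--Uhlenbeck process, controlling the Gaussian increments via the covariance $\frac{1}{2}e^{-|t-s|}$, and invoking Kolmogorov's continuity criterion (or, alternatively, your decomposition $Y_t=e^{-(t+1)}\xi+\int_{-1}^{t}e^{-(t-s)}dB_s$ into an a.s.\ finite random variable times a bounded deterministic factor plus an It\^o integral with continuous paths) gives genuine a.s.\ pathwise boundedness on $[-1,1]$, which is what the lemma's ``pathwise boundedness'' assertion actually requires. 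In short: the paper's computation is shorter but proves a weaker ($L^2$) statement and contains a formula error; your argument is the one that actually delivers the claimed pathwise conclusion.
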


In fact, we have to find a modification of L\'evy process $ L_t$, such that the cocycle property is satisfied for every $ \omega \in \Omega $. The following lemma covers the perfection problem for all processes of this work (see \cite{Kai}).
\begin{lemma}
Let $ H $ be a separable  Banach space, $ (S_t)_{t \in \mathbb{R}}$ is a $ H $ -valued and $ \mathcal{F}-$ measurable stochastic process with c\'adl\'ag paths generating a crude cocycle with respect to the metric dynamical system $(\Omega, \mathcal{F}, \mathbb{P}, (\theta_t)_{t\in \mathbb{R}})$, i.e., for all $ t \in \mathbb{R}$ we have
\begin{equation}
S_t=S_0\circ \theta_t, ~~~~\mathbb{P}-a.s.
\end{equation}
Then there is an $ H $ -valued process $ \hat{S}= (\hat{S}_t)_{t \in {\mathbb{R}}}$, such that:
\begin{enumerate}
\item[(i)]The process $S$ and $\hat{S} $ are undistinguishable;
\item[(ii)]The process $\hat{S}$ is strictly stationary, i.e.
\begin{equation}
\hat{S}_t(\omega) =\hat{S}_0(\theta_t \omega)
\end{equation}
for all $ t \in \mathbb{R}, \omega \in \Omega$.
\end{enumerate}
\end{lemma}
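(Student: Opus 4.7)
The plan is to run the standard ``perfection'' procedure: I will upgrade the crude stationarity $S_{t}=S_{0}\circ\theta_{t}$ (a.s.)\ to a pointwise identity by throwing away a single $\mathbb{P}$-null set that is invariant under the whole flow $(\theta_{t})_{t\in\mathbb{R}}$, and setting $\hat{S}$ to a constant on that set.

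First I build the exceptional null set from a countable skeleton. For each $t\in\mathbb{Q}$, the hypothesis yields a null set $N_{t}\in\mathcal{F}$ with $S_{t}(\omega)=S_{0}(\theta_{t}\omega)$ for every $\omega\in N_{t}^{c}$. I set $N_{0}:=\bigcup_{t\in\mathbb{Q}} N_{t}$, which is still null, and then
\[
\tilde N:=\bigcup_{s\in\mathbb{Q}}\theta_{s}^{-1}(N_{0}).
\]
Because each $\theta_{s}$ preserves $\mathbb{P}$ and the union is countable, $\tilde N$ is $\mathbb{P}$-null. It is also invariant under every rational shift: if $\omega\in\tilde N^{c}$, then $\theta_{v}\omega\notin N_{0}$ for every $v\in\mathbb{Q}$; for any $u\in\mathbb{Q}$ and any $s\in\mathbb{Q}$, $\theta_{s}(\theta_{u}\omega)=\theta_{s+u}\omega\notin N_{0}$, so $\theta_{u}\omega\in\tilde N^{c}$.

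Next I use the c\`adl\`ag hypothesis on $S$ to pass from rational to real times on $\Omega_{1}:=\tilde N^{c}$. Fix $\omega\in\Omega_{1}$ and $t\in\mathbb{R}$; choose rationals $q_{n}\downarrow t$. Right-continuity gives $S_{q_{n}}(\omega)\to S_{t}(\omega)$, and on $\Omega_{1}$ we have $S_{q_{n}}(\omega)=S_{0}(\theta_{q_{n}}\omega)$ for every $n$. The left limits $S_{t-}$ are handled by rationals approaching $t$ from below. I then define
\[
\hat S_{t}(\omega):=\begin{cases}S_{t}(\omega),&\omega\in\Omega_{1},\\ z_{0},&\omega\in\Omega_{1}^{c},\end{cases}
\]
where $z_{0}\in H$ is an arbitrary fixed point. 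Since $\Omega_{1}^{c}$ does not depend on $t$ and has $\mathbb{P}$-measure zero, $S$ and $\hat S$ coincide on all of $\Omega_{1}$ for every $t$, which gives (i) (indistinguishability, not merely a modification for each fixed $t$).

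The main obstacle is part (ii), where the identity $\hat S_{t}(\omega)=\hat S_{0}(\theta_{t}\omega)$ is required for every real $t$ and every $\omega$, and not just almost surely. On $\Omega_{1}$ and for rational $t$, this follows from the definition of $N_{0}$ and the $\theta_{\mathbb{Q}}$-invariance of $\tilde N$ established above. The two remaining points are (a) extending the identity from rational to real $t$ on $\Omega_{1}$, which uses the c\`adl\`ag property exactly as in the previous paragraph (both $t\mapsto S_{t}(\omega)$ and $t\mapsto S_{0}(\theta_{t}\omega)$ agree on the dense set $\mathbb{Q}$ and the former is right-continuous), and (b) ensuring $\theta_{t}\omega\in\Omega_{1}^{c}$ whenever $\omega\in\Omega_{1}^{c}$, so that both sides equal $z_{0}$ on the exceptional set. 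For (b) I enlarge $\tilde N$ once more to its full $(\theta_{t})_{t\in\mathbb{R}}$-orbit; using that $S_{\cdot}(\omega)$ is determined by its rational-time values, this orbit is still contained in a countable union of measure-preserving preimages of $N_{0}$ and hence remains $\mathbb{P}$-null. After this enlargement $\Omega_{1}$ is genuinely $\theta$-invariant, and (ii) follows for all $(t,\omega)$, completing the proof.
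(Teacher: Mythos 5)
Your construction breaks down at exactly the two points you yourself flag as ``the main obstacle,'' and the fixes you offer do not work. (a) To pass from rational to real $t$ on $\Omega_{1}$ you need the map $u\mapsto S_{0}(\theta_{u}\omega)$ to be right-continuous, since two functions that agree on $\mathbb{Q}$ with only \emph{one} of them right-continuous need not agree anywhere off $\mathbb{Q}$. But $S_{0}$ is merely an $\mathcal{F}$-measurable random variable and $\theta$ a measurable flow, so $u\mapsto S_{0}(\theta_{u}\omega)$ has no path regularity whatsoever; indeed the whole difficulty of perfection is that one may alter $S_{0}$ on a $\mathbb{P}$-null set $A$ without disturbing the crude identity at any fixed $t$, while $\{t:\theta_{t}\omega\in A\}$ is nonempty (at irrational times) for a positive-measure set of $\omega$ --- for such $\omega$ your $\hat S_{t}(\omega)=S_{t}(\omega)$ differs from $\hat S_{0}(\theta_{t}\omega)=S_{0}(\theta_{t}\omega)$, so (ii) genuinely fails, it is not just unproven. (b) The ``enlargement to the full real orbit'' is an uncountable union $\bigcup_{t\in\mathbb{R}}\theta_{t}^{-1}(N_{0})$ of null sets; your claim that it is contained in a countable union of measure-preserving preimages because ``$S_{\cdot}(\omega)$ is determined by its rational-time values'' is unsupported: whether $\theta_{t}\omega\in N_{0}$ for irrational $t$ involves $S_{q}(\theta_{t}\omega)$ versus $S_{0}(\theta_{q+t}\omega)$, which is not determined by the rational-time skeleton of the path $S_{\cdot}(\omega)$. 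Such a set need not be null, nor even measurable, and note that you need full two-sided invariance of $\Omega_{1}$ under \emph{all} real shifts for your case distinction to be consistent on both sides of the identity.

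The standard (and, as far as one can tell, the intended) argument --- the paper itself gives no proof but defers to K\"ummel's thesis \cite{Kai} --- sidesteps the need for any invariant full-measure set by making perfect stationarity true \emph{by construction}: define $\hat S_{0}(\omega)$ as the essential limit $\operatorname{ess\,lim}_{u\downarrow 0}S_{0}(\theta_{u}\omega)$ when it exists (and an arbitrary fixed point otherwise), and then set $\hat S_{t}(\omega):=\hat S_{0}(\theta_{t}\omega)$ for all $t,\omega$, so that (ii) holds identically. Indistinguishability then follows from joint measurability and Fubini: the set $\{(t,\omega):S_{t}(\omega)\neq S_{0}(\theta_{t}\omega)\}$ has null $t$-sections, hence for $\mathbb{P}$-a.e.\ $\omega$ one has $S_{u}(\omega)=S_{0}(\theta_{u}\omega)$ for Lebesgue-a.e.\ $u$; for such $\omega$ and any $t$, $S_{0}(\theta_{u+t}\omega)=S_{u+t}(\omega)$ for a.e.\ $u>0$ and $S_{u+t}(\omega)\to S_{t}(\omega)$ as $u\downarrow 0$ by right-continuity, so $\hat S_{t}(\omega)=S_{t}(\omega)$ simultaneously for all $t$. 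This is where the c\`adl\`ag hypothesis and separability of $H$ actually enter; your countable-skeleton-plus-invariant-null-set strategy cannot be repaired without effectively importing this essential-limit idea.
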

In the following, we will present an example to explain why don't we take $\hat{S}_t=S_t$ ?
\begin{example}
Let $(\Omega, \mathscr{F})=(\mathbb{R}, \mathcal{B(\mathbb{R})} )$ and $\mathbb{P}$ be a probability measure which is equivalent to the Lebesgue-measure
and $\theta_t(\omega)=\omega+t$. Now we define
\begin{eqnarray}
\varphi(t,\omega):=
\begin{cases}
\frac{sin(\theta_t(\omega))}{sin(\omega)}, ~~& sin(\omega)\neq 0; \\
1, ~~& sin(\omega)=0,
\end{cases}
\end{eqnarray}
then $\varphi(0, \omega)=1$ for each $\omega\in \Omega$, and $\varphi$ forms a measurable crude multiplicative cocycle. Indeed, fix $s\in \mathbb{R}$ and $\omega \in \Omega \backslash \{ (\pi\mathbb{Z}\cup (\pi\mathbb{Z}-s))\}$, we get
\begin{equation}
\varphi(t+s, \omega)=\frac{sin(\omega+t+s)}{sin(\omega+s)}\cdot \frac{sin(\omega+s)}{sin(\omega)}=\varphi(t,\theta_s\omega)\varphi(s,\omega)
\end{equation}
for each $t\in \mathbb{R}$.

Now, we set $\omega \in (0, \pi)$, $s=\pi-\omega$ and $t:=\frac{\pi}{2}$, which implies $\varphi(s,\omega)=0$ and $\varphi(t+s,\omega)\neq 0$. In this case, there is no perfect cocycle $\psi$ which is still indistinguishable form $\varphi$. if we assume that $\varphi(\cdot, \omega)=\psi(\cdot, \omega)$ for each $\omega \in \Omega_{1}\subset \Omega$, then
\begin{equation}
\varphi(t+s, \omega)=\psi(t+s, \omega)=\psi(t, \theta_{s}\omega)\psi(s,\omega)=\psi(t,\theta_{s}\omega)\varphi(s,\omega)=0
\end{equation}
for each $t\in \mathbb{R}$ and $\omega \in \Omega_{1}$.

Since $\varphi(t+s, \omega)\neq 0$ for $t=\frac{\pi}{2}$ and $\omega \in (0, \phi)$, then we have
\begin{equation}
\begin{aligned}
(0,\pi)\cap \Omega_{1}&=\emptyset, \\
\mathbb{P}(\Omega_1)&< 1,
\end{aligned}
\end{equation}
which implies that there is no indistinguishable perfect cocycle for $\varphi$.
\end{example}

%Now, we illustrate that the stochastic evolutionary system \eqref{rde110} generates a RDS. For this purpose, we introduce the following random transformation
%\begin{equation*}\label{T}
%\left(
%  \begin{array}{ccc}
% \bar{ x}^{\varepsilon}(t)\\
%  \bar{y}^{\varepsilon}(t)\\
%  \end{array}
%\right):=
%  \left(
%  \begin{array}{ccc}
%    &X^\varepsilon(t)-\xi^{1, \frac{1}{\varepsilon}}(\theta_t^1\omega_1)\\
%    &Y^\varepsilon(t)-\eta(\theta_t^2 \omega_2)\\
%  \end{array}
%\right).
%\end{equation*}
%Then $(\bar{x}^\varepsilon(t), \bar{y}^\varepsilon(t))$ satisfies the following equations
%\begin{eqnarray*}
%\label{rde}
%\left\{\begin{array}{l}
%\dot{\bar{x}}^\e(t)=\frac{1}{\e}A{\bar{x}}^\e(t)+\frac{1}{\e}F\big({\bar{x}^\e}(t)+\xi^{1, \frac{1}{\varepsilon}}(\theta^1_t\omega_1),{\bar{y}^\e}(t)+\eta(\theta^2_t{\omega_2})\big),\\
%\dot{\bar{y}}^{\varepsilon}(t)=B{\bar{y}^\e}(t)+G\big({\bar{x}^\e}(t)+\xi^{1, \frac{1}{\varepsilon}}(\theta^1_t\omega_1),{\bar{y}^\e}(t)+\eta(\theta^2_t{\omega_2})\big).
%
%\end{array}
%\right.
%\end{eqnarray*}

In the next, we examine the following nonstandard boundary value problem, for any $ 0 \leq t \leq T $,
\begin{equation}
\label{rrde55}
\left\{
\begin{aligned}
\frac{d\bar{X}^{\varepsilon}(t)}{dt}&= A\bar{X}^{\varepsilon}(t)+ F\big(\bar{X}^{\varepsilon}(t)+\xi^{1, 1}(\theta^1_{ t}\omega_1),\bar{Y}^{\varepsilon}(t)+\eta(\theta^2_{\e t}{\omega_2})\big), \bar{X}^{\varepsilon}(0)=\gamma(\bar{Y}^{\varepsilon}(0)), \\
\frac{d\bar{Y}^{\varepsilon}(t)}{dt}&= \varepsilon B\bar{Y}^{\varepsilon}(t)+\varepsilon G \big(\bar{X}^{\varepsilon}(t)+\xi^{1, 1}(\theta^1_{ t}\omega_1),\bar{Y}^{\varepsilon}(t)+\eta(\theta^2_{\e t}{\omega_2})\big), \bar{Y}^{\varepsilon}(T)=\tilde{y},\\
\end{aligned}
\right.
\end{equation}
where $ \varepsilon $ is a small positive parameter, $ \tilde{y} \in  \mathbb{R}^{m}, T \textgreater 0 $ and $ \gamma \in Lip(\mathbb{R}^{m},\mathbb{R}^{n})$ are given.

\begin{lemma}
\label{lemma1}
Assume that  the hypotheses $\mathbf{(H.1)}$-$\mathbf{(H.4)}$ hold.  Then for any ~$ \tilde{y} \in {\mathbb{R}^{m}}, T \textgreater 0, \gamma \in Lip(\mathbb{R}^{m},\mathbb{R}^{n})$, there exists a sufficient
small positive number $ \delta $,  such that for ~$ 0 \textless \varepsilon \textless \delta $,  the random dynamical system
defined by  \eqref{rrde55} has a unique solution $ (\bar{X}^{\varepsilon}(t,\omega, T, \gamma, \tilde{y}),\bar{Y}^{\varepsilon}(t,\omega, T, \gamma, \tilde{y}) )$.
\end{lemma}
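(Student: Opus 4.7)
The plan is to reformulate the two-point boundary value problem \eqref{rrde55} as a fixed-point equation on a suitable Banach space of continuous (or càdlàg) functions on $[0,T]$, and then exploit the spectral gap in $A$ together with the small factor $\varepsilon$ in front of the $Y$ nonlinearity to produce a contraction. The first step is to rewrite the system via variation of constants, going forward in time for $\bar X^{\varepsilon}$ (since its condition is prescribed at $t=0$) and backward in time for $\bar Y^{\varepsilon}$ (since its condition is prescribed at $t=T$):
\begin{align*}
\bar{X}^{\varepsilon}(t) &= e^{At}\gamma(\bar{Y}^{\varepsilon}(0)) + \int_0^t e^{A(t-s)} F\bigl(\bar{X}^{\varepsilon}(s) + \xi^{1,1}(\theta^1_s\omega_1),\, \bar{Y}^{\varepsilon}(s) + \eta(\theta^2_{\varepsilon s}\omega_2)\bigr)\, ds, \\
\bar{Y}^{\varepsilon}(t) &= e^{\varepsilon B(t-T)}\tilde{y} - \varepsilon \int_t^T e^{\varepsilon B(t-s)} G\bigl(\bar{X}^{\varepsilon}(s) + \xi^{1,1}(\theta^1_s\omega_1),\, \bar{Y}^{\varepsilon}(s) + \eta(\theta^2_{\varepsilon s}\omega_2)\bigr)\, ds.
\end{align*}

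Next, I would introduce the Banach space $\mathcal{X} := C([0,T]; \mathbb{R}^{n+m})$ equipped with a weighted supremum norm $\|(u,v)\|_\beta := \sup_{t\in[0,T]} e^{-\beta t}(|u(t)| + |v(t)|)$ for a well-chosen $\beta > 0$, and define the operator $\mathcal{T}^{\varepsilon}(u,v) = (\mathcal{T}_1^{\varepsilon}(u,v), \mathcal{T}_2^{\varepsilon}(u,v))$ by the right-hand sides above, with $u$ and $v$ replacing $\bar X^{\varepsilon}$ and $\bar Y^{\varepsilon}$ inside the integrands and at $t=0$. Using Hypothesis \textbf{H.1} one obtains $|e^{At}| \leq e^{-M_A t}$, and by the hyperbolicity of $-B$ there is a constant $C_B$ such that $|e^{\varepsilon B \tau}| \leq C_B$ uniformly for $\varepsilon\tau$ in any bounded interval, hence on the relevant range $\tau\in[-T,0]$. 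The boundedness assumption \textbf{H.4} on $F,G$ together with Lemma 1 (boundedness of $\xi^{1,1}$ and $\eta$ on finite intervals) ensures $\mathcal{T}^{\varepsilon}$ maps $\mathcal{X}$ into itself.

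The third step is the contraction estimate. Applying Hypothesis \textbf{H.2} together with the spectral bound on $e^{At}$ yields, for any two elements $(u_1,v_1),(u_2,v_2) \in \mathcal{X}$,
\begin{equation*}
|\mathcal{T}_1^{\varepsilon}(u_1,v_1)(t) - \mathcal{T}_1^{\varepsilon}(u_2,v_2)(t)| \leq e^{-M_A t}\|\gamma\|_{Lip}|v_1(0)-v_2(0)| + L\int_0^t e^{-M_A(t-s)}\bigl(|u_1(s)-u_2(s)|+|v_1(s)-v_2(s)|\bigr)ds,
\end{equation*}
and for the $Y$-component the prefactor $\varepsilon$ in front of the integral on $[t,T]$ provides an $O(\varepsilon T)$ Lipschitz constant. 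Choosing $\beta$ between $L$ and $M_A$ (which is possible by the spectral gap \textbf{H.3}) and then taking $\varepsilon$ sufficiently small (i.e., picking $\delta$ appropriately) makes the operator norm of $\mathcal{T}^{\varepsilon}$ strictly less than one in the $\|\cdot\|_\beta$-norm. Banach's fixed point theorem then yields a unique fixed point in $\mathcal{X}$, which is the desired solution $(\bar X^{\varepsilon}(\cdot,\omega,T,\gamma,\tilde y),\bar Y^{\varepsilon}(\cdot,\omega,T,\gamma,\tilde y))$.

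The main obstacle is controlling the coupling term $\|\gamma\|_{Lip}|v_1(0)-v_2(0)|$ in the $X$-equation, because it mixes the initial value of $v$ into an operator acting on all of $[0,T]$; one cannot simply absorb it into a Gr\"onwall-type bound. Resolving this requires either restricting to $\gamma \in \mathcal{L}_\kappa$ with $\kappa$ small enough relative to $M_A - L$, or, as we do here, the weighted norm $\|\cdot\|_\beta$ so that the factor $e^{-M_A t}$ dominates and the contribution of $|v_1(0)-v_2(0)|$ is controlled by $\|v_1-v_2\|_\beta$ with a small multiplier once $\varepsilon$ is small. The second delicate point is that $\xi^{1,1}(\theta^1_\cdot\omega_1)$ and $\eta(\theta^2_{\varepsilon\cdot}\omega_2)$ are only càdlàg, so one must verify that the integrands remain measurable and bounded $\omega$-wise on $[0,T]$; Lemma 1 above already provides the pathwise boundedness needed for this.
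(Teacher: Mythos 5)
Your overall strategy matches the paper's: rewrite the two-point boundary value problem \eqref{rrde55} via variation of constants (forward in $\bar X^\varepsilon$, backward in $\bar Y^\varepsilon$), set up the equivalent integral-equation operator on $C([0,T];\mathbb{R}^{n+m})$ with a weighted sup-norm, and invoke Banach's fixed point theorem. The paper uses the weight $e^{-\beta(T-t)}$ rather than your $e^{-\beta t}$, but that alone is not the point of divergence.

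The gap is in the step you flag as "the main obstacle," the coupling term $\|\gamma\|_{Lip}|v_1(0)-v_2(0)|$. You claim that the weighted norm makes this contribution "controlled by $\|v_1-v_2\|_\beta$ with a small multiplier once $\varepsilon$ is small." With your weight one does have $|v_1(0)-v_2(0)|\leq\|v_1-v_2\|_\beta$, but the multiplier in front is $\|\gamma\|_{Lip}$ and nothing you have written makes it shrink with $\varepsilon$: there is no $\varepsilon$ anywhere in that term, so the operator's Lipschitz constant is at least $\|\gamma\|_{Lip}$, which is an arbitrary finite number since the lemma allows any $\gamma\in Lip(\mathbb{R}^m,\mathbb{R}^n)$. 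Your fallback, restricting to $\gamma\in\mathcal{L}_\kappa$ with $\kappa$ small relative to $M_A-L$, proves a strictly weaker statement than the lemma.

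What supplies the $\varepsilon$-smallness in the paper is that $\gamma$ is applied not to the input's $v(0)$ but to the value at $t=0$ of the $\bar Y$-half of the operator, and that half carries the prefactor $\varepsilon$. Concretely, the operator is triangular: one first forms $\tilde y=\mathcal{J}_2^\varepsilon(z)$ and estimates $\|\triangle\tilde y\|_{2,\beta}\leq \frac{\varepsilon a_2 C}{\beta-\varepsilon\|B\|}\|\triangle z\|_\beta$, and then $\mathcal{J}_1^\varepsilon$ uses $\gamma(\tilde y(0))$. Feeding the $\tilde y$-estimate into the $\bar X$-estimate puts the factor $\varepsilon a_1 a_2 C/(\beta-\varepsilon\|B\|)$ in front of $\|\gamma\|_{Lip}$, yielding the contraction constant $\rho(\varepsilon)=\frac{\varepsilon a_2 C}{\beta-\varepsilon\|B\|}(1+a_1\|\gamma\|_{Lip})+\frac{a_1 C}{M_A-\beta}$ with $\rho(0)=a_1C/(M_A-\beta)<1$ by the spectral gap \textbf{H.3}. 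Equivalently, one could keep your operator $\mathcal{T}^\varepsilon$ as written and show that $\mathcal{T}^\varepsilon\circ\mathcal{T}^\varepsilon$ is a contraction, which has the same effect. Without one of these devices your argument does not close.
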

\begin{proof}
Let $ V_{ A}^{\varepsilon}(t, \omega)$ and $ V_{\varepsilon B}^{\varepsilon}(t, \omega)$ be the fundamental solution of the linear system
\begin{equation}
\left\{
\begin{aligned}
\frac{d\bar{X}^{\varepsilon}(t)}{dt}&=  A\bar{X}^{\varepsilon}(t),\\
\frac{d\bar{Y}^{\varepsilon}(t)}{dt}&=\varepsilon B\bar{Y}^{\varepsilon}(t).
\end{aligned}
\right.
\end{equation}
Then the boundary value problem \eqref{rrde55} is equivalent to the system of integral equations
\begin{equation}
\label{rrde3}
\left\{
\begin{aligned}
\bar{X}^{\varepsilon}(t)&=V_{ A}^{\varepsilon}(t)\gamma(\bar{Y}^{\varepsilon}(0))+\int_{0}^{t}V_{ A}^{\varepsilon}(t-s)F(\theta^{\e}_{s}{\omega},\bar{X}^{\varepsilon}(s),\bar{Y}^{\varepsilon}(s))ds ,      \\
\bar{Y}^{\varepsilon}(t)&= V^{\e}_{\varepsilon B}(t-T)\tilde{y}+\varepsilon \int_{T}^{t}V^{\e}_{\varepsilon B}(t-s)G(\theta^{\e}_{s}{\omega}, \bar{X}^{\varepsilon}(s),
\bar{Y}^{\varepsilon}(s))ds.
\end{aligned}
\right.
\end{equation}
To study \eqref{rrde3}, we introduce the following spaces
\begin{equation}
\mathcal{C}_1:=C([0,T], \mathbb{R}^{n}), ~~ \mathcal{C}_2:=C([0,T], \mathbb{R}^{m})
\end{equation}
and endow these spaces with the following norms
\begin{equation}
\begin{aligned}
||x||_{1, \beta}&:=\max_{0\leq t \leq  T}e^{-\beta(T-t)}|x(t)|,  ~~ for  ~~x \in \mathcal{C}_1, \\
||y||_{2, \beta}&:=\max_{0\leq t \leq  T}e^{-\beta(T-t)}|y(t)|,  ~~ for  ~~y \in \mathcal{C}_2. \\
\end{aligned}
\end{equation}
Let $ \mathcal{C} $ be the product space $\mathcal{C}:=\mathcal{C}_1 \times \mathcal{C}_2 $, $ z=(x,y)\in \mathcal{C} $. $ \mathcal{C} $ equipped with the norm
\begin{equation}
||z||_{\beta}:=||x||_{1, \beta}+||y||_{2, \beta},
\end{equation}                                                                                           %
is a Banach space.

Introduce the following two operators $\mathcal{J}_{1}^{\varepsilon}:\mathcal{C} \rightarrow \mathcal{C}_1$ and $\mathcal{J}_{2}^{\varepsilon}:\mathcal{C} \rightarrow \mathcal{C}_{2}$ by
\begin{equation}
\begin{aligned}
  \tilde{x}(t)&=\mathcal{J}_{1}^{\varepsilon}\big(z(\cdot)\big)[t]:=V_{ A}^{\varepsilon}(t)\gamma(\bar{Y}^{\varepsilon}(0))+\int_{0}^{t}V_{ A}^{\varepsilon}(t-s)F(\theta^{\e}_{s}{\omega},\bar{X}^{\varepsilon}(s),\bar{Y}^{\varepsilon}(s))ds,\\[1mm]
  \tilde{y}(t)&=\mathcal{J}_{2}^{\varepsilon}\big(z(\cdot)\big)[t]:=V^{\e}_{\varepsilon B}(t-T)\tilde{y}+\varepsilon \int_{T}^{t}V^{\e}_{\varepsilon B}(t-s)G(\theta^{\e}_{s}{\omega}, \bar{X}^{\varepsilon}(s),
\bar{Y}^{\varepsilon}(s))ds.
\end{aligned}
\end{equation}
Define the mapping  $\mathcal{J}^{\varepsilon}$ given by
\begin{equation}
\tilde{z}\big(\cdot\big)=\mathcal{J}^{\varepsilon}\big(z(\cdot)\big)
= \left(
  \begin{array}{ccc}
  \mathcal{J}_{1}^{\varepsilon}(z(\cdot))[t]\\
  \mathcal{J}_{2}^{\varepsilon}(z(\cdot))[t]\\
  \end{array}
\right).
\end{equation}
It is obvious that a fixed point $z ^{\e}$ of $ \mathcal{J}^{\varepsilon} $ represents  a solution of the boundary value problem \eqref{rrde55}.
Under hypothesis $\mathbf{H.1}$-$\mathbf{H.3}$, $\mathcal{J}^{\varepsilon}$ maps $ \mathcal{C}$ into itself,
and there are constants $ a_1 \geq 1 $ and $ a_2 \geq 1 $ such that
\begin{equation}
\begin{aligned}
||V^{\varepsilon}_{A}(t-s)||& \leq a_1 e^{-M_{A}(t-s)}, ~for  ~~t \geq s,  \\
||V^{\varepsilon}_{\varepsilon B}(t-s)||& \leq a_2 e^{\varepsilon \|B\||t-s|}, ~for ~any ~~t, s. \\
\end{aligned}
\end{equation}
In the following, we show that $ \mathcal{J}^{\varepsilon} $ is also strictly contractive. Let
\begin{equation}
\triangle {\tilde{x}}:=\tilde{x}_1-\tilde{x}_2,~~~\triangle {\tilde{y}}:=\tilde{y}_1-\tilde{y}_2, ~~~ |\triangle \tilde{z}|:=|\triangle \tilde{x}|+|\triangle \tilde{y}|.
\end{equation}
Let $ (\tilde{x}_1(t),\tilde{y}_1(t)) $ and $ (\tilde{x}_2(t),\tilde{y}_2(t) )$ satisfy the equations
\eqref{rrde3}.  Then we have
\begin{equation}
\label{xx}
\begin{aligned}
|\triangle \tilde{x}(t)|&=\big|V_{ A}^{\varepsilon}(t)\gamma(\tilde{y}_1^{\varepsilon}(0))+\int_{0}^{t}V_{ A}^{\varepsilon}(t-s)F(\theta^{\e}_{s}{\omega},\tilde{x}^{\varepsilon}_1(s),\tilde{y}^{\varepsilon}_1(s))ds. \\
&-V_{ A}^{\varepsilon}(t)\gamma(\tilde{y}_2^{\varepsilon}(0))-\int_{0}^{t}V_{ A}^{\varepsilon}(t-s)F(\theta^{\e}_{s}{\omega},\tilde{x}^{\varepsilon}_2(s),\tilde{y}^{\varepsilon}_2(s))ds\big| \\
&\leq a_1 e^{-M_{A}t} ||\gamma||_{Lip}|\triangle {\tilde{y}}^{\varepsilon}(0)|+a_1 C \int_{0}^{t}e^{-M_{A}(t-s)}|\triangle \tilde{z}(s)|ds \\
& \leq a_1 e^{-M_{A}t} ||\gamma||_{Lip}|\triangle {\tilde{y}}^{\varepsilon}(0)|+a_1 C ||\triangle \tilde{z}||_{\beta}e^{-M_{A}t}e^{\beta T}
\int_{0}^{t}e^{(M_{A}-\beta)s}ds \\
& \leq a_1 e^{-M_{A}t} ||\gamma||_{Lip}|\triangle{\tilde{y}}^{\varepsilon}(0)|+\frac{a_1 C e^{\beta(T-t)}}{M_{A}-\beta}||\triangle \tilde{z}||_{\beta}.
\end{aligned}
\end{equation}
On the other hand, we get
\begin{equation}
\begin{aligned}
|\triangle \tilde{y}(t)|&=\varepsilon\big|\int_{T}^{t}V_{\varepsilon B}^{\varepsilon}(t-s)G(\theta^{\e}_{s}{\omega},\tilde{x}^{\varepsilon}_1(s),\tilde{y}^{\varepsilon}_1(s))ds-\int_{T}^{t}V_{ \varepsilon B}^{\varepsilon}(t-s)G(\theta^{\e}_{s}{\omega},\tilde{x}^{\varepsilon}_2(s),\tilde{y}^{\varepsilon}_2(s))ds\big| \\
&\leq \varepsilon  a_2 C \int_{T}^{t}e^{\varepsilon \|B\|(t-s)}|\triangle z(s)|ds \\
& \leq \varepsilon  a_2 C ||\triangle z||_{\beta}e^{\beta T-\varepsilon \|B\|t}\int_{t}^{T}e^{-(\beta-\varepsilon \|B\|)s}ds.
\end{aligned}
\end{equation}
We assume  $ \varepsilon $ to be sufficient small such that
\begin{equation}
\beta-\varepsilon \|B\| \textgreater 0.
\end{equation}
Then, we get
\begin{equation}
\label{y}
||\triangle \tilde{y}(t)||_{2,\beta}\leq \frac{\varepsilon a_2 C }{\beta-\varepsilon \|B\|}||\triangle z||_{\beta}.
\end{equation}
Bring \eqref{y} into \eqref{xx}, we get
\begin{equation}
\label{x}
||\triangle \tilde{x}||_{1,\beta} \leq \big(\frac{\varepsilon a_1 a_2 ||\gamma||_{Lip}C}{\beta-\varepsilon \|B\|}+\frac{a_1 C}{M_{A}-\beta}\big)||\triangle z||_{\beta}
\end{equation}

Combining \eqref{x} with \eqref{y}, we have
\begin{equation}
||\triangle \tilde{z}||_{\beta} \leq \rho(\varepsilon)||\triangle z||_{\beta},
\end{equation}
where
\begin{equation*}
\rho(\varepsilon)=\frac{\varepsilon a_2 C}{\beta-\varepsilon \|B\|}(1+a_1 ||\gamma||_{Lip})+\frac{a_1 C}{M_{A}-\beta}.
 \end{equation*}
Note that,
\begin{equation}
\rho'(\varepsilon)=\frac{a_2C\beta}{(\beta-\varepsilon \|B\|)^2} (1+a_1 ||\gamma||_{Lip}) > 0,  ~~~~\rho(0) \textless 1.
\end{equation}
Then there is a sufficiently small constant $\delta>0$ and a constant $\rho_{0}\in(0,1)$, such that
\begin{equation*}
0<\rho(\varepsilon)\leq\rho_{0}<1,\ ~\mbox{for}~\ \varepsilon \in (0,\delta),
\end{equation*}
which implies that $\mathcal{J}^{\varepsilon}$ is strictly contractive. Therefore the system \eqref{rrde55} has a unique solution $ \big(\bar{X}^{\varepsilon}(t,\omega, T, \gamma, \tilde{y}),\bar{Y}^{\varepsilon}(t,\omega, T, \gamma, \tilde{y}) \big)$.
\end{proof}
\begin{remark}
There is a one-to-one relation between $\tilde{y}$ and $ \bar{Y}^{\varepsilon} (0,\omega, T, \gamma, \tilde{y})=y_0$.  Since the operator $ \mathcal{J}$ explicitly depends on $ \gamma $, we use in the following the notation $ \mathcal{J}^{\varepsilon}_{\gamma}$. Moreover, for  any $ \gamma \in \mathcal{L}$ and $ t=T$, the map
\begin{equation}
\mathbb{R}^{m}\ni y_{0}\rightarrow \phi_2^{\varepsilon}\big(t,\theta^{\varepsilon}_{-t}, (\gamma(y_0),y_0)\big)=\tilde{y}\in\mathbb{R}^{m}
\end{equation}
is invertible, and the inverse mapping $\xi^{\varepsilon}$ is given by
\begin{equation}
\xi^{\varepsilon}(T, \theta^{\e}_{T}\omega, \gamma)(\tilde{y})=\bar{Y}^{\varepsilon} (0,\omega, T, \gamma, \tilde{y}).
\end{equation}
\end{remark}
In the following lemma, we will illustrate the path property and dependence on the function $\gamma$.
\begin{lemma}
\label{lemmaa}
Assume that the hypotheses $(\mathbf{H.1})$-$(\mathbf{H.4})$ hold.  Then for  given $ \omega \in \Omega, T > 0, \tilde{y} \in \mathbb{R}^{m},  t \in [0, T]$ and for sufficiently small $ \varepsilon $, the solution $ z^{\varepsilon}(t, \omega, T, \gamma, \tilde{y})$ of
\eqref{rrde55} depends Lipschitz continuously on $ \gamma $.
\begin{equation}
\sup_{\tilde{y}\in \mathbb{R}^{m}}||z^{\varepsilon}(\cdot, \omega, T, \gamma_1, \tilde{y})-z^{\varepsilon}(\cdot, \omega, T, \gamma_2, \tilde{y})||_{\beta}\leq \frac{a e^{-\beta T}}{1-\rho(\varepsilon)}||\gamma_1-\gamma_2||_{\infty}.
\end{equation}
\begin{proof}
For $ \gamma_1 $ and $ \gamma_2 $   any functions in $ \mathcal{L}$,  define
\begin{equation}
\triangle_{\gamma} z^{\varepsilon}(t):=z^{\varepsilon}(t, \omega, T, \gamma_1, \tilde{y})-z^{\varepsilon}(t, \omega, T, \gamma_2, \tilde{y}).
\end{equation}
By Lemma \ref{lemma1},  with fixed $(\omega, T, \gamma, \tilde{y})$,   there exists a unique fixed point $z^{\varepsilon}$ of the operator $\mathcal{J}^{\varepsilon}_{\gamma}$. Thus,  we get
\begin{equation}
\begin{aligned}
||\triangle_{\gamma}z^{\varepsilon}||_{\beta}&=||\mathcal{J}^{\varepsilon}_{\gamma_1}z^{\varepsilon}(\cdot, \omega, T, \gamma_1, \tilde{y})-\mathcal{J}^{\varepsilon}_{\gamma_2}z^{\varepsilon}(\cdot, \omega, T, \gamma_2, \tilde{y})||_{\beta}\\
& \leq||\mathcal{J}^{\varepsilon}_{\gamma_1}z^{\varepsilon}(\cdot, \omega, T, \gamma_1, \tilde{y})-\mathcal{J}^{\varepsilon}_{\gamma_1}z^{\varepsilon}(\cdot, \omega, T, \gamma_2, \tilde{y})||_{\beta} \\
&+||\mathcal{J}^{\varepsilon}_{\gamma_1}z^{\varepsilon}(\cdot, \omega, T, \gamma_2, \tilde{y})-\mathcal{J}^{\varepsilon}_{\gamma_2}z^{\varepsilon}(\cdot, \omega, T, \gamma_2, \tilde{y})||_{\beta} \\
& \leq \rho(\varepsilon)||\triangle_{\gamma}z^{\varepsilon}||_{\beta}+||V^{\varepsilon}_{A}(\cdot, \omega)||_{\beta}
||\gamma_1-\gamma_2||_{\infty}.
\end{aligned}
\end{equation}
By hypothesis $(\mathbf{H.3})$, we have
\begin{equation}
||\triangle_{\gamma}z^{\varepsilon}||_{\alpha}\leq \frac{a_1 e^{\alpha T}}{1-\rho(\varepsilon)}||\gamma_1-\gamma_2||_{\infty}.
\end{equation}
This proof is complete.
\end{proof}
\end{lemma}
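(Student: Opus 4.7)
The plan is to exploit the fact, established in Lemma \ref{lemma1}, that for each admissible $\gamma$ the solution $z^{\varepsilon}(\cdot,\omega,T,\gamma,\tilde y)$ is the unique fixed point of the contraction $\mathcal{J}^{\varepsilon}_{\gamma}$ on the weighted Banach space $(\mathcal{C},\|\cdot\|_{\beta})$, with contraction constant $\rho(\varepsilon)\in(0,1)$. The key observation is that the two operators $\mathcal{J}^{\varepsilon}_{\gamma_1}$ and $\mathcal{J}^{\varepsilon}_{\gamma_2}$ differ only in the initial condition imposed on the first component (the term $V^{\varepsilon}_A(t)\gamma(\bar Y^{\varepsilon}(0))$), so Lipschitz dependence on $\gamma$ should follow from the usual "stability of fixed points under parameter perturbation" argument.

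First I would insert a triangle-inequality split: writing $z^{\varepsilon}_i:=z^{\varepsilon}(\cdot,\omega,T,\gamma_i,\tilde y)$ for $i=1,2$,
\begin{equation*}
\|\triangle_{\gamma}z^{\varepsilon}\|_{\beta}
=\|\mathcal{J}^{\varepsilon}_{\gamma_1}z^{\varepsilon}_1-\mathcal{J}^{\varepsilon}_{\gamma_2}z^{\varepsilon}_2\|_{\beta}
\le\|\mathcal{J}^{\varepsilon}_{\gamma_1}z^{\varepsilon}_1-\mathcal{J}^{\varepsilon}_{\gamma_1}z^{\varepsilon}_2\|_{\beta}
+\|\mathcal{J}^{\varepsilon}_{\gamma_1}z^{\varepsilon}_2-\mathcal{J}^{\varepsilon}_{\gamma_2}z^{\varepsilon}_2\|_{\beta}.
\end{equation*}
The first summand is bounded by $\rho(\varepsilon)\|\triangle_{\gamma}z^{\varepsilon}\|_{\beta}$ by the contraction estimate already derived in the proof of Lemma \ref{lemma1}. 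For the second summand, notice that $\mathcal{J}^{\varepsilon}_{\gamma_1}$ and $\mathcal{J}^{\varepsilon}_{\gamma_2}$ agree on the $\tilde y$-component and on the integral part of the $\tilde x$-component; only the $V^{\varepsilon}_A(t)\gamma(\bar Y^{\varepsilon}_2(0))$ term changes. Using the exponential bound $\|V^{\varepsilon}_A(t)\|\le a_1 e^{-M_A t}$, this contribution is at most $a_1 e^{-M_A t}\|\gamma_1-\gamma_2\|_{\infty}$ pointwise in $t$.

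Next I would weight this pointwise bound by $e^{-\beta(T-t)}$ to convert it into the $\|\cdot\|_{1,\beta}$ norm: $e^{-\beta(T-t)}a_1 e^{-M_A t}\|\gamma_1-\gamma_2\|_{\infty}=a_1 e^{-\beta T}e^{(\beta-M_A)t}\|\gamma_1-\gamma_2\|_{\infty}$. Because the contraction step already forces $\beta<M_A$, this function of $t$ is decreasing on $[0,T]$, so its maximum is attained at $t=0$ and equals $a_1 e^{-\beta T}\|\gamma_1-\gamma_2\|_{\infty}$, giving
\begin{equation*}
\|\mathcal{J}^{\varepsilon}_{\gamma_1}z^{\varepsilon}_2-\mathcal{J}^{\varepsilon}_{\gamma_2}z^{\varepsilon}_2\|_{\beta}\le a_1 e^{-\beta T}\|\gamma_1-\gamma_2\|_{\infty}.
\end{equation*}
Combining the two summands yields $\|\triangle_{\gamma}z^{\varepsilon}\|_{\beta}\le\rho(\varepsilon)\|\triangle_{\gamma}z^{\varepsilon}\|_{\beta}+a_1 e^{-\beta T}\|\gamma_1-\gamma_2\|_{\infty}$, and rearranging using $\rho(\varepsilon)<1$ delivers the claimed bound with $a=a_1$. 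Taking the supremum over $\tilde y\in\mathbb{R}^{m}$ is immediate since the right-hand side is independent of $\tilde y$.

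I expect the only subtle point to be the weight-norm computation that turns the pointwise decay $e^{-M_A t}$ into the factor $e^{-\beta T}$ in the statement; this uses the spectral-gap choice $\beta<M_A$ already forced by the Lemma \ref{lemma1} contraction argument, so no new hypothesis is needed. Everything else is routine fixed-point perturbation, and the Lipschitz assumption on $\gamma$ (which gave $\|\gamma\|_{Lip}$ in Lemma \ref{lemma1}) is not used here — only the uniform norm $\|\gamma_1-\gamma_2\|_{\infty}$ enters, because the $\gamma$-dependence of $\mathcal{J}^{\varepsilon}_{\gamma}$ is through evaluation at a single point $\bar Y^{\varepsilon}(0)$.
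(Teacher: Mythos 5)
Your proof is correct and follows essentially the same fixed-point perturbation argument as the paper: the identical triangle-inequality split into $\mathcal{J}^{\varepsilon}_{\gamma_1}z^{\varepsilon}_1-\mathcal{J}^{\varepsilon}_{\gamma_1}z^{\varepsilon}_2$ plus $\mathcal{J}^{\varepsilon}_{\gamma_1}z^{\varepsilon}_2-\mathcal{J}^{\varepsilon}_{\gamma_2}z^{\varepsilon}_2$, the contraction bound $\rho(\varepsilon)\|\triangle_{\gamma}z^{\varepsilon}\|_{\beta}$ for the first term, the $V^{\varepsilon}_{A}$-estimate for the second, and the final rearrangement using $\rho(\varepsilon)<1$. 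Your explicit weighted-norm computation (using $\beta<M_A$ to extract the factor $a_1e^{-\beta T}$) is in fact spelled out more carefully than in the paper, whose closing display writes $a_1e^{\alpha T}$ with $\alpha$ in place of $\beta$, evidently a typo relative to the bound stated in the lemma.
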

Define the random graph transform by
\begin{equation}
\label{main}
\begin{aligned}
\psi^{\varepsilon}(T, \omega, \gamma)(\tilde{y})&:= \phi^{\varepsilon}_1\big(T, \omega, \big(\gamma(\xi^{\varepsilon}(T, \gamma^{\varepsilon}_{T}\omega, \gamma)(\tilde{y})),\xi^{\varepsilon}(T, \gamma^{\varepsilon}_{T}\omega, \gamma)(\tilde{y}\big)\big)\\
&=\bar{X}^{\varepsilon}(T, \omega, T, \gamma, \tilde{y}) \\
&=V^{\varepsilon}_{A}(T)\gamma(\bar{Y}^{\varepsilon}(0))+\int_{0}^{T}V^{\varepsilon}_{A}(t-s)F\left(\theta^{\varepsilon}_{s}\omega,\bar{X}^{\varepsilon}(s), \bar{Y}^{\varepsilon}(s)\right)ds.
\end{aligned}
\end{equation}
such that the cocycle property is satisfied. By the similar techniques \cite[Lemma 4.1]{BS} and \cite[Lemma 4.7]{BS}, we get the following two lemmas.
\begin{lemma}
\label{positive}
For $ \omega \in \Omega $, let $ \gamma^{\varepsilon}(\omega, \cdot) \in Lip(\mathbb{R}^m, \mathbb{R}^{n} )$. Suppose that  for  $ \gamma^{\varepsilon}(\omega, \cdot)$
  a random fixed points of $ \psi^{\varepsilon}$, we have
\begin{equation}
\psi^{\epsilon}(t,\omega,\gamma^{\varepsilon}(\omega, \cdot) )(\tilde{y})=\gamma^{\varepsilon}(\theta^{\epsilon}_t\omega, \tilde{y}),
\end{equation}
for $ t \textgreater 0 $  and $ \omega \in \Omega$.
Then the random Lipschitz manifold defined by
\begin{equation}
\mathcal{M}^{\epsilon}(\omega):=\{(\gamma^{\varepsilon}(\omega, \tilde{y}),\tilde{y})|\tilde{y}\in \mathbb{R}^{m}\},
\end{equation}
is positively invariant.
\end{lemma}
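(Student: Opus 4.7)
The plan is to verify the set inclusion in \eqref{invariant set} directly for $\mathcal{M}^{\varepsilon}(\omega)$, by unpacking the fixed-point identity $\psi^{\varepsilon}(t,\omega,\gamma^{\varepsilon}(\omega,\cdot))(\tilde{y})=\gamma^{\varepsilon}(\theta^{\varepsilon}_t\omega,\tilde{y})$ in terms of the full flow $\phi^{\varepsilon}=(\phi^{\varepsilon}_1,\phi^{\varepsilon}_2)$ of \eqref{rrde4} and the inverse map $\xi^{\varepsilon}$ provided by the remark following Lemma \ref{lemma1}. Measurability of $\mathcal{M}^{\varepsilon}(\omega)$ as a random set follows from the measurability of $\omega\mapsto\gamma^{\varepsilon}(\omega,\tilde{y})$ inherited from the contraction iteration used to produce it, so the only substantive content is the invariance.

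First I would fix $\omega\in\Omega$ and $t>0$, and take an arbitrary point $(x_0,y_0)\in\mathcal{M}^{\varepsilon}(\omega)$, so that by definition $x_0=\gamma^{\varepsilon}(\omega,y_0)$. I then set
\begin{equation*}
\tilde{y}:=\phi^{\varepsilon}_2\bigl(t,\omega,(x_0,y_0)\bigr),
\end{equation*}
which will serve as the $\mathbb{R}^{m}$-label of the image point on the manifold at $\theta^{\varepsilon}_t\omega$. The goal is then to show $\phi^{\varepsilon}_1(t,\omega,(x_0,y_0))=\gamma^{\varepsilon}(\theta^{\varepsilon}_t\omega,\tilde{y})$, for then $\phi^{\varepsilon}(t,\omega,(x_0,y_0))=(\gamma^{\varepsilon}(\theta^{\varepsilon}_t\omega,\tilde{y}),\tilde{y})\in\mathcal{M}^{\varepsilon}(\theta^{\varepsilon}_t\omega)$, yielding \eqref{invariant set}.

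The verification of this identity is where the fixed-point hypothesis enters. By the definition \eqref{main} of the graph transform, $\psi^{\varepsilon}(t,\omega,\gamma^{\varepsilon}(\omega,\cdot))(\tilde{y})=\phi^{\varepsilon}_1\bigl(t,\omega,(\gamma^{\varepsilon}(\omega,y_0'),y_0')\bigr)$, where $y_0':=\xi^{\varepsilon}(t,\theta^{\varepsilon}_t\omega,\gamma^{\varepsilon}(\omega,\cdot))(\tilde{y})$. By the remark following Lemma \ref{lemma1}, $\xi^{\varepsilon}(t,\theta^{\varepsilon}_t\omega,\gamma^{\varepsilon}(\omega,\cdot))$ is precisely the inverse of the map $y\mapsto\phi^{\varepsilon}_2(t,\omega,(\gamma^{\varepsilon}(\omega,y),y))$, so $y_0'$ is the unique element of $\mathbb{R}^{m}$ sent to $\tilde{y}$ by this map. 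But our choice of $\tilde{y}=\phi^{\varepsilon}_2(t,\omega,(\gamma^{\varepsilon}(\omega,y_0),y_0))$ exhibits $y_0$ itself as such a preimage, so uniqueness forces $y_0'=y_0$. Substituting back and invoking the fixed-point hypothesis,
\begin{equation*}
\gamma^{\varepsilon}(\theta^{\varepsilon}_t\omega,\tilde{y})=\psi^{\varepsilon}\bigl(t,\omega,\gamma^{\varepsilon}(\omega,\cdot)\bigr)(\tilde{y})=\phi^{\varepsilon}_1\bigl(t,\omega,(x_0,y_0)\bigr),
\end{equation*}
which is the required identity.

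The only step that carries any real weight is the bijectivity of the map $y\mapsto\phi^{\varepsilon}_2(t,\omega,(\gamma^{\varepsilon}(\omega,y),y))$; without it one could not identify $y_0'$ with our chosen $y_0$. This invertibility, however, is exactly what Lemma \ref{lemma1} (together with its remark) delivers for $\varepsilon$ sufficiently small via the contraction estimate $\rho(\varepsilon)<1$, so no additional work is required here. With that identification in hand, the argument is a one-line substitution, and since $(x_0,y_0)\in\mathcal{M}^{\varepsilon}(\omega)$ was arbitrary, we obtain $\phi^{\varepsilon}(t,\omega,\mathcal{M}^{\varepsilon}(\omega))\subseteq\mathcal{M}^{\varepsilon}(\theta^{\varepsilon}_t\omega)$ for every $t>0$ and $\omega\in\Omega$, which is precisely positive invariance.
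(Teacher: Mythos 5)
Your argument is correct and is essentially the intended one: the paper gives no proof of this lemma at all, delegating it to \cite[Lemma 4.1]{BS}, and your unpacking of the graph transform \eqref{main} — choosing $\tilde{y}=\phi^{\varepsilon}_2(t,\omega,(x_0,y_0))$, using invertibility of $y\mapsto\phi^{\varepsilon}_2(t,\omega,(\gamma^{\varepsilon}(\omega,y),y))$ to force $y_0'=y_0$, and then invoking the fixed-point identity — is exactly the standard argument behind that citation. The one point to keep in view is that this invertibility (unique solvability of the boundary value problem for this particular $\gamma^{\varepsilon}$) needs $\rho(\varepsilon)<1$, i.e.\ $\varepsilon$ small relative to $\|\gamma^{\varepsilon}(\omega,\cdot)\|_{Lip}$, which is available here because the fixed point produced by $\mathcal{A}^{\varepsilon}$ lies in $\mathcal{L}_{\kappa^{*}}$ and $\varepsilon$ is taken sufficiently small.
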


\begin{lemma}
Assume that the hypotheses $(\mathbf{H.1})$-$(\mathbf{H.4})$ hold. Then for sufficiently small $ \varepsilon $ and sufficiently large $ T $, the graph transform $ \psi^{\varepsilon}(T, \omega, \cdot)$ maps the set $ \mathcal{L}_{\kappa}$ into itself, where $ \kappa $ is any positive number satisfying
\begin{equation}
\kappa\geq \kappa^{*}=\frac{a_2}{1-\beta_0-a_1a_2e^{-\frac{T\beta}{2}}},
\end{equation}
where $\beta_0$ is any given number from the interval $\left(\frac{a_1 C}{M_{A}-\beta}, 1\right)$.
\end{lemma}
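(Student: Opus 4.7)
The plan is to verify, for $\gamma\in\mathcal{L}_{\kappa}$, that $\psi^{\varepsilon}(T,\omega,\gamma)$ is both bounded and has Lipschitz semi-norm at most $\kappa$, i.e.\ that it lies in $\mathcal{L}_{\kappa}$. Boundedness is immediate from the representation \eqref{main}: using the exponential decay $\|V_{A}^{\varepsilon}(t)\|\leq a_{1}e^{-M_{A}t}$ from Hypothesis \textbf{H.1} and the uniform boundedness of $F$ from Hypothesis \textbf{H.4},
\[
\|\psi^{\varepsilon}(T,\omega,\gamma)\|_{\infty}\leq a_{1}e^{-M_{A}T}\|\gamma\|_{\infty}+\frac{a_{1}}{M_{A}}\sup_{x,y}|F(x,y)|<\infty,
\]
so $\psi^{\varepsilon}(T,\omega,\gamma)\in\mathcal{L}$. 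The substantive work is the Lipschitz estimate in the variable $\tilde{y}$.

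For the Lipschitz estimate I would take $\tilde{y}_{1},\tilde{y}_{2}\in\mathbb{R}^{m}$, let $(\bar{X}_{i}^{\varepsilon},\bar{Y}_{i}^{\varepsilon})$ be the unique solutions of the boundary value problem \eqref{rrde55} supplied by the previous existence lemma, and set $\Delta\bar{X}=\bar{X}_{1}^{\varepsilon}-\bar{X}_{2}^{\varepsilon}$, $\Delta\bar{Y}=\bar{Y}_{1}^{\varepsilon}-\bar{Y}_{2}^{\varepsilon}$, $\Delta\tilde{y}=\tilde{y}_{1}-\tilde{y}_{2}$, $|\Delta z|=|\Delta\bar{X}|+|\Delta\bar{Y}|$. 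Subtracting the integral equations \eqref{rrde3} and using the global Lipschitz bounds on $F,G$ from Hypothesis \textbf{H.2}, the assumption $\|\gamma\|_{Lip}\leq\kappa$, and the semigroup estimates on $V_{A}^{\varepsilon}$ and $V_{\varepsilon B}^{\varepsilon}$ produces the coupled integral inequalities
\begin{align*}
|\Delta\bar{X}(t)|&\leq a_{1}\kappa e^{-M_{A}t}|\Delta\bar{Y}(0)|+a_{1}L\int_{0}^{t}e^{-M_{A}(t-s)}|\Delta z(s)|\,ds,\\
|\Delta\bar{Y}(t)|&\leq a_{2}e^{\varepsilon\|B\|(T-t)}|\Delta\tilde{y}|+\varepsilon a_{2}L\int_{t}^{T}e^{\varepsilon\|B\|(s-t)}|\Delta z(s)|\,ds.
\end{align*}

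Recasting these in the weighted norms $\|\cdot\|_{1,\beta},\|\cdot\|_{2,\beta}$ from the contraction proof of the previous lemma and summing yields, for sufficiently small $\varepsilon$, a single scalar inequality of the form
\[
\|\Delta z\|_{\beta}\leq\beta_{0}\|\Delta z\|_{\beta}+\bigl(a_{2}+a_{1}a_{2}\kappa e^{-T\beta/2}\bigr)|\Delta\tilde{y}|+o_{\varepsilon}(1)(1+\kappa)|\Delta\tilde{y}|,
\]
where $\beta_{0}$ is any chosen number in $(a_{1}C/(M_{A}-\beta),1)$; the factor $a_{1}a_{2}\kappa e^{-T\beta/2}$ arises from controlling $|\Delta\bar{Y}(0)|$ via the $\|\cdot\|_{2,\beta}$ bound and then multiplying by the $e^{-M_{A}T}$ decay of $V_{A}^{\varepsilon}$ that appears in the first integral inequality at $t=T$. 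Solving for $\|\Delta z\|_{\beta}$ and feeding the bound back into the pointwise inequality for $\Delta\bar{X}(T)$, which equals $\psi^{\varepsilon}(T,\omega,\gamma)(\tilde{y}_{1})-\psi^{\varepsilon}(T,\omega,\gamma)(\tilde{y}_{2})$, delivers
\[
|\psi^{\varepsilon}(T,\omega,\gamma)(\tilde{y}_{1})-\psi^{\varepsilon}(T,\omega,\gamma)(\tilde{y}_{2})|\leq\frac{a_{2}+a_{1}a_{2}e^{-T\beta/2}\kappa}{1-\beta_{0}}|\Delta\tilde{y}|.
\]
Demanding the right-hand coefficient to be $\leq\kappa$ is equivalent to $\kappa\geq a_{2}/(1-\beta_{0}-a_{1}a_{2}e^{-T\beta/2})=\kappa^{*}$, which is exactly the stated threshold.

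The main obstacle I expect is the careful bookkeeping needed to extract precisely this coefficient structure: one must choose $\beta$ in the admissible range so that $a_{1}C/(M_{A}-\beta)<1$ (ensuring $\beta_{0}$ can be chosen in the stated interval), and then take $T$ large enough for $1-\beta_{0}-a_{1}a_{2}e^{-T\beta/2}$ to be strictly positive (so that $\kappa^{*}$ is a finite positive threshold) and $\varepsilon$ small enough that the residual $o_{\varepsilon}(1)$ cross terms, inherited from the slow-time evolution of $\Delta\bar{Y}$ and from the transition between pointwise and weighted-norm estimates, can be absorbed into the leading contribution.
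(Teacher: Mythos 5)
The paper offers no proof of this lemma at all --- it only appeals to the techniques of Schmalfuss and Schneider \cite[Lemmas 4.1 and 4.7]{BS} --- and your argument is exactly that standard graph-transform estimate: difference the integral equations \eqref{rrde3} for two boundary values $\tilde y_1,\tilde y_2$ with the same $\gamma\in\mathcal{L}_{\kappa}$, pass to the weighted norms $\|\cdot\|_{1,\beta},\|\cdot\|_{2,\beta}$ of the contraction lemma, evaluate at $t=T$, and use \textbf{H.4} for boundedness of the image, which does reproduce the threshold $\kappa\geq a_2/\bigl(1-\beta_0-a_1a_2e^{-T\beta/2}\bigr)$. So the proposal is correct and is essentially the proof the paper implicitly relies on; the only remaining work is the bookkeeping you flag yourself, namely choosing $\beta<M_A$ so that $a_1C/(M_A-\beta)<1$ and $e^{(\beta-M_A)T}\leq e^{-T\beta/2}$, and taking $T$ large and $\varepsilon$ small so that $1-\beta_0-a_1a_2e^{-T\beta/2}>0$ and the $o_\varepsilon(1)$ cross terms are absorbed.
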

Set
\begin{equation}
\mathscr{H}^{\varepsilon}(\omega):=\bigcup_{t\geq 0}\psi^{\varepsilon}(t, \theta^{\e}_{-t}\omega, L_{{\kappa}^{*}}).
\end{equation}
Then we have
\begin{equation}
\psi^{\varepsilon}(t,\omega, \mathscr{H}^{\varepsilon}(\omega))\subset \mathscr{H}^{\varepsilon}(\theta^{\varepsilon}_{t}\omega), ~~t\geq 0,
\end{equation}
By a similar technique as  in \cite[Lemma 4.10]{BS}, we are able to prove the cocycle property for the graph transform $\psi^{\varepsilon}$. By means of the graph transform $\psi^{\varepsilon}$, we define an operator $\mathcal{A}^{\varepsilon}$  via
\begin{equation}
\label{define}
(\omega, \tilde{y})\rightarrow \mathcal{A}^{\varepsilon}(\gamma)(\omega, \tilde{y}):=\psi^{\varepsilon}(T, \theta^{\varepsilon}_{-T}\omega,\gamma(\theta^{\varepsilon}_{-T}\omega))(\tilde{y}).
\end{equation}

Now we prove the exponential tracking property.
\begin{lemma}
\label{lemfix}
Assume that the hypotheses $(\mathbf{H.1})$-$\mathbf{(H.4)}$ hold. Then for sufficiently small $ \varepsilon $, the Lipschitz invariant manifold has the exponential tracking property in the following sense: \\
For every  solution $\bar{z}^{\varepsilon}(t,\omega)=(\bar{x}^{\varepsilon}(t,\omega), \bar{y}^{\varepsilon}(t,\omega))$ for \eqref{rrde55}, there is an orbit ${\bar{\bar{z}}}^{\varepsilon}(t,\omega)=(\bar{\bar{x}}^{\epsilon}(t,\omega), \bar{\bar{y}}^{\epsilon}(t,\omega))$ on the manifold $\mathcal{M}^{\epsilon}(\omega)$ which satisfies the evolutionary equation
\begin{equation}
\dot{\tilde{y}}^{\varepsilon}=B\tilde{y}^{\varepsilon}+G(\gamma^{\varepsilon}(\theta^{\varepsilon}_t\omega, \tilde{y}^{\varepsilon}),\tilde{y}^{\varepsilon},\theta^{\varepsilon}_t\omega)
\end{equation}
such that
\begin{equation}
||{{\bar{z}}}^{\varepsilon}(t,\omega)-{\bar{\bar{z}}}^{\varepsilon}(t,\omega)||_{\infty}\leq C_{\tilde{\kappa}, \varepsilon}e^{\frac{-\tilde{\kappa} t}{\varepsilon}}||\bar{z}_0-\bar{\bar{z}}_0||_{\infty}
\end{equation}
with $\bar{z}_0=(\bar{x}^{\varepsilon}(0),\bar{y}^{\varepsilon}(0))$, $\bar{\bar{z}}_0=(\bar{\bar{x}}^{\varepsilon}(0),\bar{\bar{y}}^{\varepsilon}(0))$ and some positive constant $\tilde{\kappa}$.
\end{lemma}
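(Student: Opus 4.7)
The plan is a Lyapunov--Perron fixed point argument in an exponentially weighted Banach space, carried out in the rescaled system \eqref{rrde55}. Since the scaling $t\mapsto\varepsilon t$ connects \eqref{rdem} and \eqref{rrde55}, a decay rate $\tilde\kappa$ in the rescaled time corresponds to the claimed rate $\tilde\kappa/\varepsilon$ in original time; moreover the stationary orbits $\xi^{1,1/\varepsilon}(\theta^1_t\omega_1)$ and $\eta(\theta^2_t\omega_2)$ are common to both trajectories and cancel in the difference, so I can work directly with $(\bar X^{\varepsilon},\bar Y^{\varepsilon})$.

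Let $\bar z^{\varepsilon}=(\bar x^{\varepsilon},\bar y^{\varepsilon})$ be the given solution and look for a tracking orbit $\bar{\bar z}^{\varepsilon}=(\bar{\bar x}^{\varepsilon},\bar{\bar y}^{\varepsilon})$ on $\mathcal{M}^{\varepsilon}$, i.e.\ $\bar{\bar x}^{\varepsilon}(t)=\gamma^{\varepsilon}(\theta^{\varepsilon}_t\omega,\bar{\bar y}^{\varepsilon}(t))$. Setting $u=\bar{\bar x}^{\varepsilon}-\bar x^{\varepsilon}$ and $v=\bar{\bar y}^{\varepsilon}-\bar y^{\varepsilon}$, subtracting the two copies of \eqref{rrde55} produces linear-plus-Lipschitz equations for $(u,v)$ together with the manifold constraint $u(0)=\gamma^{\varepsilon}(\omega,\bar y^{\varepsilon}(0)+v(0))-\bar x^{\varepsilon}(0)$. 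Applying variation of constants---forward integration along the contracting semigroup $e^{At}$ for $u$, and, along the hyperbolic splitting of $B$ furnished by $\mathbf{H.1}$, forward integration on the stable subspace and backward integration from $+\infty$ on the unstable subspace for $v$---recasts the problem as a fixed point $\mathcal{T}(u,v)=(u,v)$ on the Banach space
\[
\mathcal{E}_{\tilde\kappa}=\bigl\{w\in C([0,\infty);\mathbb{R}^{n+m}):\|w\|_{\tilde\kappa}:=\sup_{t\geq0}e^{\tilde\kappa t}|w(t)|<\infty\bigr\}
\]
for some $\tilde\kappa\in(0,M_A-L)$.

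Next I verify that $\mathcal{T}$ is a strict contraction on $\mathcal{E}_{\tilde\kappa}$. Using the bounds $\|e^{At}\|\leq a_1 e^{-M_At}$, $\|e^{\varepsilon Bt}\|\leq a_2 e^{\varepsilon\|B\||t|}$, the Lipschitz estimates on $F,G$ from $\mathbf{H.2}$, and the Lipschitz bound $\|\gamma^{\varepsilon}\|_{Lip}\leq\kappa^{*}$ on the manifold from the previous lemma, the contraction constant has the same structure as $\rho(\varepsilon)$ in Lemma~\ref{lemma1}: a fast-direction term of order $a_1L/(M_A-L-\tilde\kappa)$, made strictly less than one by $\mathbf{H.3}$ and the choice of $\tilde\kappa$, plus an $O(\varepsilon)$ slow-direction term coming from the prefactor $\varepsilon$ in the $v$-equation. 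For $\varepsilon$ sufficiently small the total is strictly less than $1$, so there is a unique fixed point satisfying $|u(t)|+|v(t)|\leq C_{\varepsilon,\tilde\kappa}e^{-\tilde\kappa t}(|u(0)|+|v(0)|)$. Undoing the time rescaling gives the claimed bound.

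The hard part is the slow coordinate $v$. Because $\mathbf{H.1}$ only asserts that $-B$ has no imaginary eigenvalues rather than outright dissipativity, the forward semigroup $e^{\varepsilon Bt}$ need not be contracting, and naive forward variation of constants for $\dot v=\varepsilon Bv+\varepsilon\Delta G$ is unbounded in $\mathcal{E}_{\tilde\kappa}$. The remedy is the hyperbolic splitting of $B$ combined with backward integration on the unstable subspace; this implicitly fixes the unstable part of $v(0)$ as part of the fixed point equation, while the manifold constraint determines $u(0)$ from $v(0)$. Verifying that the composite operator (with both the backward integrals and the manifold coupling in place) is still a contraction is the delicate step, and it is precisely here that the smallness of $\varepsilon$ and the Lipschitz control $\kappa^{*}$ on $\gamma^{\varepsilon}$ together enter.
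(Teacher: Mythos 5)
Your overall skeleton---a Lyapunov--Perron fixed point argument in an exponentially weighted space, with the $\varepsilon$ prefactor of the slow equation supplying the small slow-direction contribution to the contraction constant---is the same strategy as the argument the paper relies on (its proof simply invokes \cite[Theorem 4.2]{Fu} together with dominated convergence). However, your treatment of the slow coordinate has a genuine flaw. You split $B$ into stable and unstable subspaces and integrate the stable part of $v$ forward from $0$. In the rescaled system \eqref{rrde55} the slow generator is $\varepsilon B$, so on its stable subspace the semigroup decays only like $e^{-c\varepsilon t}$ with $c>0$ fixed. Consequently the homogeneous term $e^{\varepsilon B t}v_s(0)$ has infinite $\|\cdot\|_{\tilde\kappa}$-norm for any fixed $\tilde\kappa\in(0,M_A-L)$ unless $v_s(0)=0$, and even if you force $v_s(0)=0$ the forward convolution $\varepsilon\int_0^t e^{-c\varepsilon(t-s)}e^{-\tilde\kappa s}\,ds\sim\varepsilon e^{-c\varepsilon t}/(\tilde\kappa-c\varepsilon)$ decays only at the rate $O(\varepsilon)$, not at rate $\tilde\kappa$. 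So your operator $\mathcal{T}$ does not map $\mathcal{E}_{\tilde\kappa}$ into itself, and no smallness of $\varepsilon$ repairs this: tracking at an $\varepsilon$-independent rate in rescaled time (equivalently $\tilde\kappa/\varepsilon$ after undoing the time change) is incompatible with forward integration of any part of the slow difference.

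The standard remedy---and what the cited argument does---is to integrate the \emph{entire} slow difference backward from $+\infty$, i.e.\ $v(t)=-\varepsilon\int_t^\infty e^{\varepsilon B(t-s)}\bigl(G(\theta^{\varepsilon}_s\omega,\bar{\bar x}^{\varepsilon},\bar{\bar y}^{\varepsilon})-G(\theta^{\varepsilon}_s\omega,\bar x^{\varepsilon},\bar y^{\varepsilon})\bigr)\,ds$, using only the two-sided bound $\|e^{\varepsilon Bt}\|\le a_2e^{\varepsilon\|B\||t|}$ from \textbf{H.1}; for $w=(u,v)\in\mathcal{E}_{\tilde\kappa}$ this term has weighted norm at most $\varepsilon a_2 C\|w\|_{\tilde\kappa}/(\tilde\kappa-\varepsilon\|B\|)=O(\varepsilon)$, which is exactly the small slow-direction term you wanted. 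No hyperbolic splitting of $B$ is needed for tracking (hyperbolicity of $-B$ is used elsewhere, namely for the stationary orbit $\eta(\omega_2)$ of the linear slow equation); the free parameter is the slow initial value $v(0)$, which the backward formula determines implicitly, while the manifold constraint $u(0)=\gamma^{\varepsilon}\bigl(\omega,\bar y^{\varepsilon}(0)+v(0)\bigr)-\bar x^{\varepsilon}(0)$, with $\|\gamma^{\varepsilon}\|_{Lip}\le\kappa^{*}$, feeds $v(0)$ into the fast homogeneous term $e^{At}u(0)$. With this replacement your estimates (contraction constant of the form $a_1L(1+\kappa^{*})/(M_A-\tilde\kappa)+O(\varepsilon)$, made less than one by \textbf{H.3} and small $\varepsilon$) go through; you should also note, via the positive invariance statement of Lemma \ref{positive}, that the orbit started at $\bigl(\gamma^{\varepsilon}(\omega,\bar y^{\varepsilon}(0)+v(0)),\,\bar y^{\varepsilon}(0)+v(0)\bigr)$ indeed remains on $\mathcal{M}^{\varepsilon}(\theta^{\varepsilon}_t\omega)$ and solves the reduced slow equation, which is what the lemma asserts.
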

\begin{proof}
Thanks to  \eqref{main}, we can use dominated convergence theorem and the same method  in \cite[Theorem 4.2]{Fu} to obtain  the required results.
\end{proof}

\medskip
\textbf{Acknowledgements}.  We would like to thank Xianming Liu (Huazhong University of Sciences and Technology, China) for helpful discussions.

\section*{Reference}

\end{document}